\newcommand{\bel}[1]{\begin{equation}\label{#1}}
\newcommand{\be}{\begin{equation}}
\newcommand{\ba}{\begin{eqnarray}}
\newcommand{\ea}{\end{eqnarray}}
\newcommand{\qe}{\end{equation}}
\newcommand{\R}{{\mathbb R}}
\newcommand{\SP}{\mathbb{S}}
\newcommand{\wt}{\widetilde}
\newcommand{\Aut}{\mathrm{Aut}}
\newcommand{\PC}{\mathcal{PC}_{>0}}
\newcommand{\NNG}{\mathcal{PC}_{\geq 0}}
\newcommand{\Hmm}[1]{\leavevmode{\marginpar{\tiny%
$\hbox to 0mm{\hspace*{-0.5mm}$\leftarrow$\hss}%
\vcenter{\vrule depth 0.1mm height 0.1mm width \the\marginparwidth}%
\hbox to
0mm{\hss$\rightarrow$\hspace*{-0.5mm}}$\\\relax\raggedright #1}}}
\newtheorem{theorem}{Theorem}[section]
\newtheorem{lemma}[theorem]{Lemma}
\newtheorem{definition}[theorem]{Definition}
\newtheorem{remark}[theorem]{Remark}
\newtheorem{problem}[theorem]{Problem}
\newtheorem{example}[theorem]{Example}
\newtheorem{observation}[theorem]{Observation}
\begin{document}

\title[Planar graphs with nonnegative curvature]{The set of vertices with positive curvature in a planar graph with nonnegative curvature}
\author{Bobo Hua}
\email{bobohua@fudan.edu.cn}
\address{Bobo Hua: School of Mathematical Sciences, LMNS, Fudan University, Shanghai 200433, China; Shanghai Center for Mathematical Sciences, Fudan University, Shanghai 200433, China}

\author{Yanhui Su}
\email{suyh@fzu.edu.cn}
\address{Yanhui Su: College of Mathematics and Computer Science, Fuzhou University, Fuzhou 350116, China}

\begin{abstract} 
In this paper, we give the sharp upper bound for the number of vertices with positive curvature in a planar graph with nonnegative combinatorial curvature. Based on this, we show that the automorphism group of a planar---possibly infinite---graph with nonnegative combinatorial curvature and positive total curvature is a finite group, and give an upper bound estimate for the order of the group.
\end{abstract}
\maketitle
\tableofcontents
Mathematics Subject Classification 2010: 05C10, 31C05.


\par
\maketitle

\bigskip


\section{Introduction}\label{sec:intro}
The combinatorial curvature for a planar graph, embedded in the sphere or the plane, was introduced by \cite{MR0279280,MR0410602,MR919829,Ishida90}: Given a planar graph, one may canonically endow the ambient space with a piecewise flat metric, i.e. replacing faces by regular polygons and gluing them together along common edges. The combinatorial curvature of a planar graph is defined via the generalized Gaussian curvature of the metric surface. Many interesting geometric and combinatorial results have been obtained since then, see e.g.
\cite{MR1600371,Woess98,MR1864922,BP01,MR1894115,MR1923955,
MR2038013,MR2096789,RBK05,MR2243299,MR2299456,
MR2410938,MR2466966,MR2470818,MR2558886,MR2818734,MR2826967,MR3624614,Gh17}.

Let $(V,E)$ be a (possibly infinite) locally finite, undirected simple graph with the set of vertices $V$ and the set of edges $E.$ It is called planar if it is topologically embedded into the sphere or the plane. We write $G=(V,E,F)$ for the combinatorial structure, or the cell complex, induced by the embedding where $F$ is the set of faces, i.e. connected components of the complement of the embedding image of the graph $(V,E)$ in the target. We say that a planar graph $G$ is a \emph{planar tessellation} if the following hold, see e.g. \cite{MR2826967}:
\begin{enumerate}[(i)]
\item Every face is homeomorphic to a disk whose boundary consists of finitely many edges of the graph.
\item Every edge is contained in exactly two different faces.
\item For any two faces whose closures have non-empty intersection, the intersection is either a vertex or an edge.
\end{enumerate} In this paper, we only consider planar tessellations and call them planar graphs for the sake of simplicity. For a planar tessellation, we always assume that for any vertex $x$ and face $\sigma,$ $$\deg(x)\geq 3,\ \mathrm{deg}(\sigma)\geq 3$$ where $\deg(\cdot)$ denotes the degree of a vertex or a face. For any planar graph $G=(V,E,F),$ we write \begin{equation}\label{eq:max}D_G:=\sup_{\sigma\in F}\deg(\sigma).\end{equation}
For a planar graph $G$, the \emph{combinatorial curvature}, the \emph{curvature} for short, at the
vertex is defined as
\begin{equation}\label{def:comb}\Phi(x)=1-\frac{\deg(x)}{2}+\sum_{\sigma\in F:x\in \overline{\sigma}}\frac{1}{\deg(\sigma)},\quad x\in V,\end{equation} where the summation is taken over all faces $\sigma$ whose closure $\overline{\sigma}$ contains $x.$  To digest the definition, we endow the ambient space, $\SP^2$ or $\R^2,$ with a canonical piecewise flat metric structure and call it the (regular) \emph{polyhedral surface}, denoted by $S(G)$: The length of each edge is set to one, each face is set to being isometric to a Euclidean regular polygon of side length one with same facial degree, and the metric is induced by gluing faces along their common edges, see \cite{MR1835418} for the definition of gluing metrics. It is well-known that the generalized Gaussian curvature on a polyhedral surface, as a measure, concentrates on the vertices. And one easily sees that the combinatorial curvature at a vertex is in fact the mass of the generalized Gaussian curvature at that vertex up to the normalization $2\pi,$ see e.g. \cite{MR2127379,MR3318509}.

In this paper, we study planar graphs with nonnegative combinatorial curvature. We denote by $$\PC:=\{G=(V,E,F): \Phi(x)>0,\forall x\in V\}$$  the class of planar graphs with positive curvature everywhere, and by $$\NNG:=\{G=(V,E,F): \Phi(x)\geq 0,\forall x\in V\}$$ the class of planar graphs with nonnegative curvature.
For any finite planar graph $G\in \NNG,$ by Alexandrov's embedding theorem, see e.g. \cite{MR2127379}, its polyhedral surface $S(G)$ can be isometrically embedded into $\R^3$ as a boundary of a convex polyhedron. This yields many examples for the class $\PC,$ e.g. the $1$-skeletons of 5 Platonic solids, 13 Archimedean solids, and 92 Johnson solids. Besides these, the class $\PC$ contains many other examples \cite{RBK05,MR2836763}, since in general a face of $G,$ which is a regular polygon in $S(G),$ may split into several pieces of non-coplanar faces in the embedded image of $S(G)$ in $\R^3.$

We review some known results on the class $\PC.$ Stone \cite{MR0410602}  first obtained a Myers type theorem: A planar graph with the curvature bounded below uniformly by a positive constant is a finite graph.
Higuchi \cite{MR1864922} conjectured that it is finite even if the curvature is positive everywhere, which was proved by DeVos and Mohar 
\cite{MR2299456}, see \cite{MR2096789} for the case of cubic graphs. There are two special families of graphs in $\PC$ called prisms and anti-prisms, both consisting of infinite many examples, see e.g. \cite{MR2299456}. DeVos and Mohar \cite{MR2299456} proved that there are only finitely many other graphs in $\PC$ and proposed the following problem to find the largest graph among them.
\begin{problem}[\cite{MR2299456}] What is the number $$C_{\SP^2}:=\max_{G=(V,E,F)}{\sharp V},$$ where the maximum is taken over graphs in $\PC,$ which are not prisms or antiprisms, and $\sharp V$ denotes the cardinality of $V$?
\end{problem}
On one hand, the main technique to obtain the upper bound of $C_{\SP^2}$ is the so-called discharging method, which was used in the proof of the Four Colour Theorem, see \cite{MR0543792,MR1441258}. DeVos and Mohar \cite{MR2299456} used this method to show that $C_{\SP^2}\leq 3444,$ which was improved to $C_{\SP^2}\leq 380$ by Oh \cite{MR3624614}. By a refined argument, Ghidelli \cite{Gh17} proved that $C_{\SP^2}\leq 208.$ On the other hand, for the lower bound many authors \cite{RBK05,MR2836763,Gh17,Ol17} attempted to construct large examples in this class, and finally found some examples possessing $208$ vertices. Hence, this completely answers the problem that $C_{\SP^2}=208.$




In this paper, we study the class of planar graphs with nonnegative curvature $\NNG.$ It turns out the class $\NNG$ is much larger than $\PC$ and contains many interesting examples. Among the finite planar graphs in $\NNG$ is a family of so-called fullerenes. A fullerene is a finite cubic planar graph whose faces are either pentagon or hexagon. There are plenty of examples of fullerenes which are important in the real-world applications, to cite a few \cite{KHBCS,MR1668340,MR1441972,BGM12, MR3706860, BuE17b}.  Among the infinite planar graphs in $\NNG$ is a family of all planar tilings with regular polygons as tiles, see e.g. \cite{MR992195,Gal09}. These motivate our investigations of the structure of the class $\NNG.$ For a planar graph $G,$ we denote by $$\Phi(G):=\sum_{x\in V}\Phi(x)$$ the total curvature of $G.$ For a finite planar graph $G,$ Gauss-Bonnet theorem states that $\Phi(G)=2.$ For an infinite planar graph $G\in \NNG,$ the Cohn-Vossen type theorem, see \cite{MR2299456,MR2410938}, yields that
\begin{equation}\label{CVthm}\Phi(G)\leq 1.\end{equation} In \cite{HuaSu17a}, the authors proved that the total curvature for a planar graph with nonnegative curvature is an integral multiple of $\frac{1}{12}.$ 

For any $G=(V,E,F)\in \NNG,$ we denote by
$$T_G:=\{v\in V: \Phi(x)>0\}$$ the set of vertices with non-vanishing curvature.
For any infinite planar graph in $\NNG$, Chen and Chen \cite{MR2410938,MR2470818} obtained
the interesting result that $T_G$ is a finite set. By Alexandrov's embedding theorem \cite{MR2127379}, the polyhedral surface $S(G)$ can be isometrically embedded into $\R^3$ as a boundary of a noncompact convex polyhedron. The set $T_G$ serves as the set of the vertices/corners of the convex polyhedron, so that much geometric information of the polyhedron is contained in $T_G.$  We are interested in the structure of the set $T_G.$

Analogous to the prisms and antiprisms in $\PC,$
we define some similar families of planar graphs in $\NNG.$
\begin{definition}\label{def:prism} We call a planar graph $G=(V,E,F)\in \NNG$ a prism-like graph if either \begin{enumerate}\item $G$ is an infinite graph and $D_G\geq 43,$ where $D_G$ is defined in Equation \eqref{eq:max}, or 
\item $G$ is a finite graph and there are at least two faces with degree at least $43.$
\end{enumerate}
\end{definition}
The name of ``prism-like" graph is chosen since the structure of these graphs is simple in some sense, analogous to a prism or an antiprism, and can be completely determined, see Theorem~\ref{thm:prminf} and Theorem~\ref{thm:finiteprism}. One may ask the following problem analogous to that of DeVos and Mohar.
\begin{problem}\label{prob:main} What are the numbers $$K_{\SP^2}:=\max_{\mathrm{finite}\  G}{\sharp T_G},\quad K_{\R^2}:=\max_{\mathrm{infinite}\ G}{\sharp T_G},$$ where the maxima are taken over finite and infinite graphs in $\NNG$ which are not prism-like graphs respectively?
\end{problem}

The second part of the problem was proposed in \cite{MR3547929} and an elementary result was obtained therein, $$K_{\R^2}\leq 1722.$$ In this paper, we give the answer to the second part of the problem.
\begin{theorem}\label{main thm}
$$K_{\R^2}=132.$$
Moreover, a graph in this class attains the maximum if and only if its polyhedral surface contains $12$ disjoint hendecagons.
\end{theorem}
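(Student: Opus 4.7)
The strategy is to establish the lower bound $K_{\R^2}\ge 132$ by an explicit construction and the upper bound $\sharp T_G\le 132$ via a discharging argument combined with the Cohn--Vossen inequality \eqref{CVthm}.

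\emph{Construction.} I would produce an infinite planar graph $G_0 \in \NNG$ with $D_{G_0}\le 12$ (so not prism-like), whose polyhedral surface contains $12$ pairwise disjoint hendecagons, and in which every vertex of those hendecagons has one of the three patterns $(3,11,12)$, $(4,6,11)$ or $(3,3,4,11)$---each yielding the curvature value $1-\tfrac{\deg(x)}{2}+\sum \tfrac{1}{\deg(\sigma)} = \tfrac{1}{132}$---while every other vertex has a zero-curvature pattern such as $(3,12,12)$, $(4,6,12)$ or $(6,6,6)$. A parity check along the boundary of an $11$-gon shows that no single one of the three extremal patterns can surround a hendecagon by itself (alternation fails on an odd cycle), so the technical step is to combine these patterns and fit the resulting patches into a globally consistent planar tessellation. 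Then $\sharp T_{G_0}=12\cdot 11 = 132$ and $\Phi(G_0) = 132\cdot \tfrac{1}{132}=1$, saturating \eqref{CVthm}.

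\emph{Upper bound.} Given an infinite, non-prism-like $G \in \NNG$, so $D_G\le 42$, I would (a) enumerate the finite list of vertex types $(\deg(x);\deg(\sigma_1),\ldots,\deg(\sigma_{\deg(x)}))$ with $\Phi(x)>0$ subject to all face degrees $\le 42$, and verify that the three patterns listed above are the unique types realising the minimum positive value $\tfrac{1}{132}$, each incident to a unique $11$-gon; (b) design a discharging scheme, modelled on the arguments of DeVos--Mohar \cite{MR2299456}, Oh \cite{MR3624614} and Ghidelli \cite{Gh17} for the class $\PC$ but adapted to the nonnegative setting, redistributing combinatorial curvature among the vertices of $T_G$ (and possibly their incident large faces) so that every $x\in T_G$ carries final charge at least $\tfrac{1}{132}$; and (c) combine with \eqref{CVthm} to conclude $\sharp T_G\le 132$. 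In the equality case, every $x\in T_G$ must be of one of the three extremal types and hence lie on a unique $11$-gonal face; counting then yields $132/11 = 12$ hendecagons, and sharing any vertex would contradict the vertex-type list, forcing the $12$ hendecagons to be pairwise disjoint.

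\emph{Main obstacle.} Step (b) is the substantive difficulty. Positive curvatures strictly below $\tfrac{1}{132}$ do occur under $D_G\le 42$: the degree-$3$ type $(3,11,13)$ yields $\Phi(x)=\tfrac{1}{858}$ and $(3,7,41)$ yields $\Phi(x)=\tfrac{1}{1722}$. Hence the lower bound $\tfrac{1}{132}$ cannot be proved pointwise, and one must locate compensating charge either at neighbouring vertices or at the incident faces of ``medium'' degree in $\{7,\ldots,42\}$ whose presence forces the small curvature, and then verify that the transport rules close uniformly across every admissible local configuration. The precise threshold $42$ appearing in the definition of \emph{prism-like} is used exactly to exclude the most delicate patterns $(3,7,c)$ with $c\ge 43$, which would otherwise permit unboundedly many positive-curvature vertices; tracking equality through the discharging yields the rigid $12$-hendecagon characterisation.
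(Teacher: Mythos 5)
Your strategy coincides with the paper's: the lower bound via an explicit periodic tessellation containing $12$ disjoint hendecagons whose boundary vertices carry the patterns $(3,11,12)$, $(4,6,11)$, $(3,3,4,11)$ of curvature exactly $\tfrac{1}{132}$, and the upper bound via discharging against the Cohn--Vossen bound $\Phi(G)\le 1$. You have also correctly located the difficulty (bad vertices such as $(3,11,13)$ with $\Phi=\tfrac{1}{858}$) and the parity obstruction on the odd boundary of a hendecagon. The problem is that what you have written is a plan rather than a proof: essentially all of the technical content of the theorem lies inside your step (b) and inside the phrase ``fit the resulting patches into a globally consistent planar tessellation,'' and neither is carried out.

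Concretely, for the upper bound the complete list of bad patterns under $D_G\le 42$ is $(3,7,k)$ for $32\le k\le 41$, $(3,8,k)$ for $21\le k\le 23$, $(3,9,k)$ for $16\le k\le 17$, $(3,10,14)$, $(3,11,13)$, $(4,5,k)$ for $18\le k\le 19$, and $(4,7,9)$. The paper needs a three-step analysis with many subcases to find donors for each: for instance, in the $(3,11,13)$ case one must separately handle the possibility that the vertices flanking the tridecagon are themselves of pattern $(3,11,11)$ (resolved by a parity argument on the $13$ boundary vertices); for $(3,7,k)$ and $(4,7,9)$ one must analyse connected clusters of heptagons in the dual graph; and for $(4,5,k)$ one needs a parity argument on the squares adjacent to the $18$-gon. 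One must also certify that no donor vertex is asked to give away curvature by two different rules, which the paper checks configuration by configuration. None of this is routine, and your proposal supplies none of it, so the bound $\sharp T_G\le 132$ is not established. Likewise, for the lower bound, observing that the three extremal patterns must be mixed does not show that a consistent infinite tessellation realising them exists; one must exhibit it, e.g.\ as a central patch glued to infinitely many copies of an annular patch, and verify that every remaining vertex has a zero-curvature pattern. Until those two pieces are executed, the theorem is not proved.
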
 On one hand, we give the upper bound $K_{\R^2}\leq 132$ by the discharging method in Section~\ref{section:3}. On the other hand, we construct an example possessing $132$ vertices with non-vanishing curvature as in Figure~\ref{size132}, see the construction in Section~\ref{section:4}.

\begin{figure}[htbp]
\begin{center}
\includegraphics[width=\linewidth]{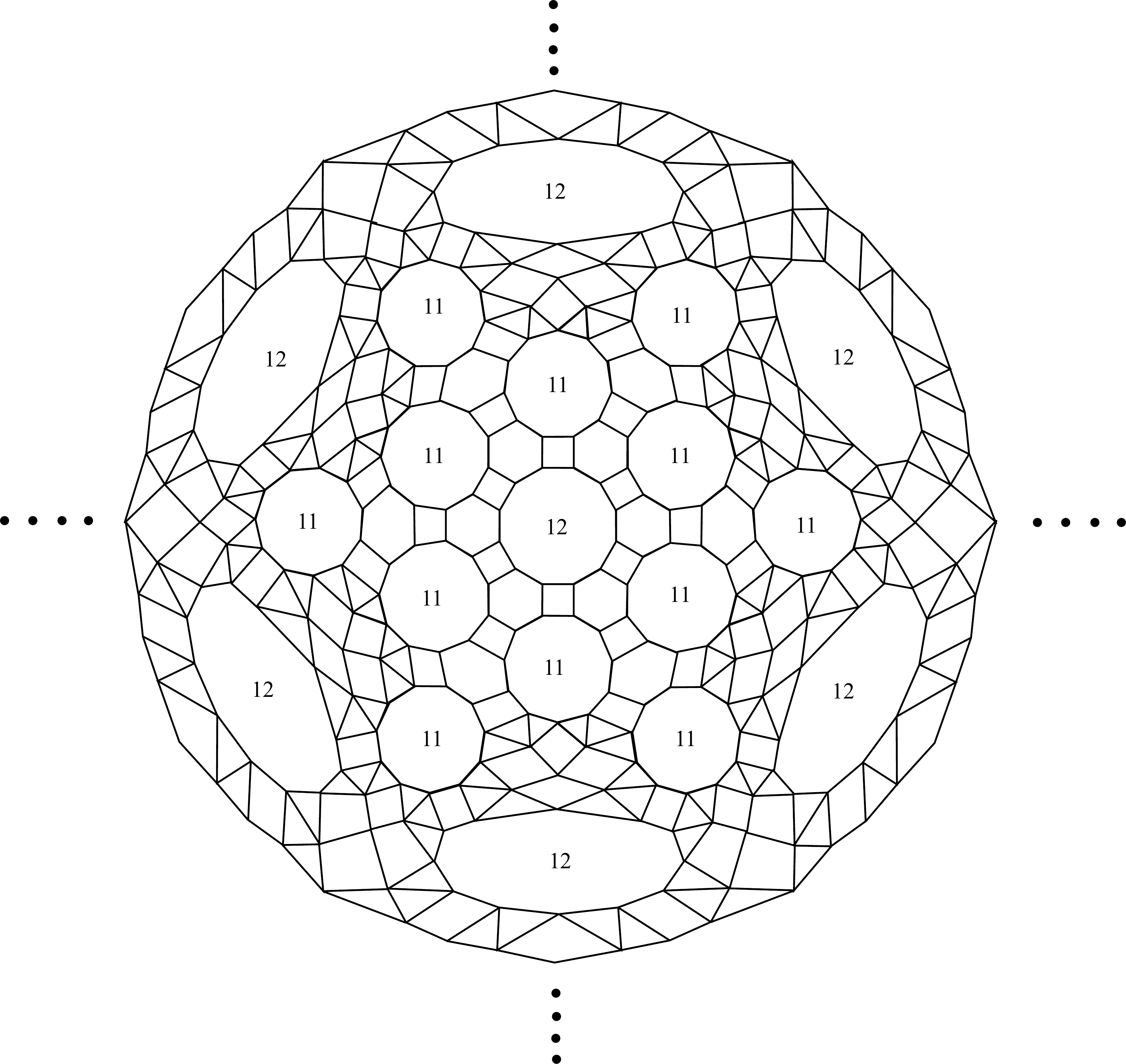}
\caption{\small A planar graph $G\in \NNG$ with $\sharp T_G=132.$}
\label{size132}
\end{center}
\end{figure}

For the first part of Problem~\ref{prob:main}, we give a family of infinitely many examples with arbitrarily large number of vertices of non-vanishing curvature, see Example~\ref{ex:oneface}, which are not prism-like graphs, and hence $$K_{\SP^2}=\infty.$$ By modifying the definition of $K_{\SP^2},$ we get the following result.
\begin{theorem}\label{thm:finite} Let $$\wt{K}_{\SP^2}:=\max_{\mathrm{finite}\  G}{\sharp T_G},$$ where the maximum is taken over finite graphs in $\NNG$ whose maximal facial degree is less than $132.$ Then $$\wt{K}_{\SP^2}=264.$$ Moreover, a graph in this class attains the maximum if and only if its polyhedral surface contains $24$ disjoint hendecagons.
\end{theorem}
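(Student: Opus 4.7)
The plan is to reduce Theorem~\ref{thm:finite} to a finite-graph analogue of Theorem~\ref{main thm}, where the only change in the global bookkeeping is the replacement of the Cohn--Vossen inequality $\Phi(G)\leq 1$ for infinite graphs in $\NNG$ by the Gauss--Bonnet identity $\Phi(G)=2$ for finite planar graphs (whose polyhedral surface is homeomorphic to $\SP^2$). The discharging scheme that will be used in Section~\ref{section:3} to prove $K_{\R^2}\leq 132$ is a purely \emph{local} redistribution of the initial charges $\Phi(x)$ among the vertices of $T_G$, and its essential output is the pointwise estimate $\wt{\Phi}(x)\geq \frac{1}{132}$ on the post-discharging curvature at every $x\in T_G$. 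The first step is to verify that this estimate, under the blanket hypothesis $D_G<132$, is insensitive to whether the ambient surface is the sphere or the plane; the assumption on $D_G$ is precisely what excludes the large-face configurations (compare Example~\ref{ex:oneface}) that would otherwise spoil the uniform $\frac{1}{132}$ bound.

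Concretely, I would (i) reapply the discharging rules from Section~\ref{section:3} verbatim to any finite $G\in \NNG$ with $D_G<132$, noting that each local verification uses only the degrees of the incident faces and never any global property of the ambient surface; (ii) conclude $\wt{\Phi}(x)\geq \frac{1}{132}$ for every $x\in T_G$; and (iii) sum and apply Gauss--Bonnet:
\[
\frac{\sharp T_G}{132}\ \leq\ \sum_{x\in T_G}\wt{\Phi}(x)\ =\ \sum_{x\in V}\Phi(x)\ =\ \Phi(G)\ =\ 2,
\]
which gives $\sharp T_G\leq 264$ and hence $\wt{K}_{\SP^2}\leq 264$.

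For the characterization of equality, the chain above saturates if and only if $\wt{\Phi}(x)=\frac{1}{132}$ for every $x\in T_G$. Tracking equality through each discharging rule, as in the parallel analysis for Theorem~\ref{main thm}, forces every face containing a vertex of $T_G$ to be a hendecagon and forces these hendecagons to be pairwise disjoint. Counting corners then gives $11\cdot\sharp\{\text{hendecagons}\}=\sharp T_G=264$, so the extremal graphs are exactly those whose polyhedral surface contains $24$ disjoint hendecagons. For the matching lower bound, I would construct a finite planar example by doubling the infinite $132$-vertex pattern exhibited in Section~\ref{section:4}: take two isometric copies of a suitable ``disk'' portion of the graph in Figure~\ref{size132}, each carrying $12$ disjoint hendecagons, and glue them along congruent hexagonal boundaries so as to close up into a spherical tessellation with $24$ disjoint hendecagons, $132+132=264$ vertices of positive curvature, and every remaining vertex flat.

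The principal obstacle is this last step: arranging the gluing so that the result is a \emph{planar tessellation} satisfying conditions (i)--(iii) and lies in $\NNG$. One must choose the cut so that each half-face on one side is matched by a congruent half-face on the other, so that every face crossing the seam remains a hexagon (ensuring no new vertex of non-vanishing curvature is created there), and so that the combined vertex degrees along the boundary preserve $\Phi\geq 0$. Adapting the local discharging analysis to the spherical setting is essentially routine, whereas the combinatorial engineering of the explicit $264$-vertex example is the part that requires genuine care.
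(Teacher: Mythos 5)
Your proposal follows the paper's own proof essentially verbatim: the paper likewise observes that the discharging scheme of Section~\ref{section:3} is purely local, applies it unchanged to a finite $G\in\NNG$ with $D_G<132$, and replaces the Cohn--Vossen bound $\Phi(G)\leq 1$ by Gauss--Bonnet $\Phi(G)=2$ to obtain $\sharp T_G\leq 264$, while the extremal example is built exactly as you suggest by doubling the $132$-vertex construction (gluing $P_{A_1}\leftrightarrow P_{B_1}\leftrightarrow P_{B_2}\leftrightarrow P_{A_2}$). One small correction to your equality analysis: $\wt{\Phi}(x)=\Phi(x)=\frac{1}{132}$ forces each $x\in T_G$ to have pattern $(3,3,4,11)$, $(4,6,11)$ or $(3,11,12)$, so each such vertex is incident to exactly one hendecagon (not that every face containing it is a hendecagon), but the count $264/11=24$ disjoint hendecagons comes out the same.
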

The proof of the theorem follows from the same argument as in Theorem~\ref{main thm}, see Section~\ref{section:3} and Section~\ref{section:4}. The upper bound of maximal facial degree, $132,$ in the above theorem is chosen by considering both the discharging argument and concrete examples with many vertices of non-vanishing curvature in Example~\ref{ex:oneface}, see the explanations above that example.

As an application, we may estimate the order of automorphism groups of planar graphs with nonnegative curvature. The automorphism groups of planar graphs have been extensively studied in the literature, to cite a few \cite{MR0296808,MR0371715,MR2410150,MR1739919}. Let $G=(V,E,F)$ be a planar graph. A bijection $R:V\to V$ is called a graph automorphism if it preserves the graph structure of $(V,E).$ A triple $(H_V,H_E,H_F)$ with bijections on $V,$ $E$ and $F$ respectively is called a cellular automorphism of $G=(V,E,F)$ if they preserve the incidence structure of the cell complex $G.$ We denote by $\Aut(G)$ the graph automorphism group of the graph $(V,E),$ and by $\wt{\Aut}(G)$ the cellular automorphism group of the planar graph $G,$ see Section~\ref{sec:auto} for definitions. We prove for any graph $G\in\NNG$ with positive total curvature, the cellular automorphism group is finite, and give the estimate for the order of the group.
\begin{theorem}\label{thm:autoint}
Let $G=(V,E,F)$ be a planar graph with nonnegative combinatorial curvature and $\Phi(G)>0.$ Then we have the following:
\begin{enumerate} \item If $G$ is infinite, then 
$$\sharp \wt{\Aut}(G)\leq \left\{\begin{array}{ll}132!\times 5!, & \mathrm{for}\ D_G\leq 42,\\
2D_G,& \mathrm{for}\ D_G> 42.\end{array}\right.$$

\item If $G$ is finite, then 
$$\sharp \wt{\Aut}(G)\leq \left\{\begin{array}{ll}264!\times 5!, & \mathrm{for}\ D_G\leq 42,\\
4D_G,& \mathrm{for}\ D_G> 42.\end{array}\right.$$
\end{enumerate}
\end{theorem}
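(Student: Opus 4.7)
The plan is to split the argument according to the size of $D_G$ and exploit two very different structural facts: for $D_G\leq 42$ the sharp bound on $\sharp T_G$ proved earlier (Theorems~\ref{main thm} and~\ref{thm:finite}), and for $D_G>42$ the rigidity forced by the presence of one or two faces of very large degree together with the classification of (near-)prism-like graphs.

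\emph{Case $D_G\leq 42$.} Under this hypothesis $G$ is not prism-like: for infinite $G$ this is Definition~\ref{def:prism}(1), while for finite $G$ no face has degree $\geq 43$ and so condition (2) fails. Hence Theorem~\ref{main thm} gives $\sharp T_G\leq 132$ in the infinite case and Theorem~\ref{thm:finite} gives $\sharp T_G\leq 264$ in the finite case. Any cellular automorphism $\phi\in\wt{\Aut}(G)$ preserves vertex degrees, facial degrees and the vertex-face incidence, hence preserves the combinatorial curvature defined by~\eqref{def:comb}, and so restricts to a permutation of $T_G$. This yields a homomorphism
\[
\rho\colon \wt{\Aut}(G)\longrightarrow \mathrm{Sym}(T_G),
\]
whose image has order at most $(\sharp T_G)!$. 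Since $\Phi(G)>0$, $T_G$ is nonempty; pick any $v\in T_G$. From $\Phi(v)>0$ and $\deg(\sigma)\geq 3$ one obtains $1-\tfrac{\deg(v)}{2}+\tfrac{\deg(v)}{3}>0$, hence $\deg(v)\leq 5$. Every element of $\ker\rho$ fixes $v$ and so lies in $\mathrm{Stab}_{\wt{\Aut}(G)}(v)$. A standard propagation argument through the planar tessellation shows that a cellular automorphism fixing a flag is the identity; therefore $\mathrm{Stab}(v)$ embeds into the symmetric group on the $\leq 5$ edges incident to $v$, giving $|\ker\rho|\leq \deg(v)!\leq 5!$. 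Multiplying the two bounds yields $|\wt{\Aut}(G)|\leq 132!\times 5!$ (infinite) and $\leq 264!\times 5!$ (finite).

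\emph{Case $D_G>42$.} Now one exploits that $G$ has a distinguished nonempty set of faces of degree $\geq 43$ that must be setwise invariant under $\wt{\Aut}(G)$. For infinite $G$, the graph is prism-like by Definition~\ref{def:prism}(1), and Theorem~\ref{thm:prminf} classifies it: there is a unique face $\sigma^{*}$ of maximal degree $D_G$ to which a rigid periodic prism-like tail is attached. Every cellular automorphism fixes $\sigma^{*}$ setwise, acts on its boundary cycle of length $D_G$, and is determined by this action, yielding $|\wt{\Aut}(G)|\leq 2D_G$. For finite $G$ with $D_G>42$ there are two subcases. If at least two faces have degree $\geq 43$, then $G$ is finite prism-like and Theorem~\ref{thm:finiteprism} describes it as essentially a prism/antiprism-type graph with two distinguished large faces that may be interchanged, giving a dihedral action on either face together with an extra factor of $2$ for the swap, whence $|\wt{\Aut}(G)|\leq 4D_G$. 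If exactly one face has degree $\geq 43$, then the discharging analysis of Section~\ref{section:3} together with the description in Example~\ref{ex:oneface} forces this face to be the unique maximal face of $G$, hence setwise invariant, and any automorphism acts on its boundary by a dihedral element, so $|\wt{\Aut}(G)|\leq 2D_G\leq 4D_G$.

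\emph{Main obstacle.} The $D_G\leq 42$ half of the theorem is essentially a book-keeping consequence of the earlier bounds on $\sharp T_G$ and the elementary degree bound $\deg(v)\leq 5$ on $T_G$. The real work lies in the $D_G>42$ half, where one must carefully extract from the structure theorems the rigidity statement that an automorphism is already determined by its action on the boundary cycle(s) of the large face(s) --- i.e.\ that the ``almost flat'' periodic remainder cannot support any further symmetry. The most delicate situation is the finite non-prism-like variant with exactly one face of degree $\geq 43$, which is not covered by the prism-like classifications and instead requires combining Example~\ref{ex:oneface} with a local rigidity argument for the region outside that face.
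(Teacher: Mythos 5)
Your proposal is correct and follows essentially the same route as the paper: the restriction homomorphism $\rho\colon\wt{\Aut}(G)\to S_{T_G}$ combined with the bounds $\sharp T_G\leq 132$ (resp.\ $264$) and the local rigidity fact that an automorphism fixing a vertex of $T_G$ (of degree at most $5$) together with its neighbours is the identity, giving the factor $5!$; and for $D_G>42$ the prism-like structure theorems forcing a dihedral action on the boundary of the large face(s), with an extra factor $2$ when two such faces can be swapped. The only cosmetic difference is that in the finite subcase with exactly one face of degree $\geq 43$ you invoke Example~\ref{ex:oneface} and the discharging analysis, which are not needed --- the uniqueness of that face already makes it invariant, and the same rigidity lemma (the paper's Lemma~\ref{lem:isom}) finishes the argument exactly as in the infinite case.
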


Note that Whitney \cite{MR1506961} proved a well-known theorem that any finite 3-connected planar graph, i.e. remaining connected after deleting any two vertices, can be uniquely embedded into $\SP^2$. This has been generalized to infinite graphs by Mohar \cite{MR923264} that a locally finite, 3-connected planar graph, whose faces are bounded by cycles of finite size, has a unique embedding in the plane, see also \cite{MR0384588,MR685062}. These results imply that for any 3-connected planar graph $G=(V,E,F)$ in our setting, 
$$\wt{\Aut}(G)\cong\Aut(G).$$ Hence all results in Theorem~\ref{thm:autoint} apply to the graph automorphism group if the graph is 3-connected.

The paper is organized as follows: In the next section, we recall some basic facts on the combinatorial curvature of planar graphs.
Section~\ref{section:3} is devoted to the upper bound estimates for Theorem~\ref{main thm} and Theorem~\ref{thm:finite}. 
In Section~\ref{section:4}, we construct examples to show the lower bound estimates for Theorem~\ref{main thm} and Theorem~\ref{thm:finite}. The last section contains the proof of Theorem~\ref{thm:autoint}.

\section{Preliminaries}

Let $G=(V,E,F)$ be a planar graph induced by an embedding of a graph $(V,E)$ into $\SP^2$ or $\R^2.$  We only consider the appropriate embedding such that $G$ is a tessellation of $S,$ see the definition in the introduction. Hence $G$ is a finite graph if and only if it embeds into $\SP^2,$ and $G$ is an infinite graph if and only if it embeds into $\R^2.$

We say that a vertex $x$ is incident to an edge $e,$ denoted by $x\prec e,$ (similarly, an edge $e$ is incident to a face $\sigma$, denoted by $e\prec \sigma$; or a vertex $x$ is incident to a face $\sigma,$ denoted by $x\prec \sigma$) if the former is a subset of the closure of the latter. For any face $\sigma,$ we denote by $$\partial\sigma:=\{x\in V: x\prec \sigma\}$$ the vertex boundary of $\sigma.$ Two vertices are called neighbors if there is an edge connecting them. We denote by $\deg(x)$ the degree of a vertex $x,$ i.e. the number of neighbors of a vertex $x,$ and by $\deg(\sigma)$ the degree of a face $\sigma,$ i.e. the number of edges incident to a face $\sigma$ (equivalently, the number of vertices incident to $\sigma$). Two faces $\sigma$ and $\tau$ are called adjacent, denoted by $\sigma\sim\tau,$ if there is an edge incident to both of them, i.e. they share a common edge. Note that by the tessellation properties they share at most one edge.

For a planar graph $G=(V,E,F),$ let $S(G)$ denote the polyhedral surface with piecewise flat metric defined in the introduction. For $S(G),$ it is locally isometric to a flat domain in $\R^2$ near any interior point of an edge or a face, while it might be non-smooth near some vertices. As a metric surface, the generalized Gaussian curvature $K$ of $S(G)$ vanishes at smooth points and can be regarded as a measure concentrated on the isolated singularities, i.e. on vertices. One can show that the mass of the generalized Gaussian curvature at each vertex $x$ is given by
$K(x) = 2\pi-\Sigma_x,$
where $\Sigma_x$ denotes the total angle at $x$ in the metric space $S(G),$ see \cite{MR2127379}. Moreover, by direct computation one has
$K(x) = 2\pi\Phi(x),$ where the combinatorial curvature $\Phi(x)$ is defined in \eqref{def:comb}. Hence one can show that a planar graph $G$ has nonnegative combinatorial curvature if and only if the polyhedral surface $S(G)$ is a generalized convex surface.

In a planar graph, the pattern of a vertex $x$ is defined as a vector $$(\mathrm{deg}(\sigma_1),\mathrm{deg}(\sigma_2),\cdots,\mathrm{deg}(\sigma_{N})),$$ where $N=\deg(x),$ $\{\sigma_i\}_{i=1}^N$ are the faces to which $x$ is incident, and $\mathrm{deg}(\sigma_1)\leq\mathrm{deg}(\sigma_2)\leq\cdots\leq\mathrm{deg}(\sigma_{N}).$ 

Table \ref{tabl1} is the list of all
possible patterns of a vertex with positive curvature (see
\cite{MR2299456,MR2410938}); Table \ref{tabl2} is the list of all
possible patterns of a vertex with vanishing curvature (see \cite{MR992195,MR2410938}).

\begin{table}
\refstepcounter{table}\label{tabl1}
\begin{tabular}{|lc|lr|}
\hline
Patterns &&$\Phi(x)$&\\
    \hline
 $(3,3,k)$ & $3\leq k$&$1/6+1/k$&\\
 $(3,4,k)$  & $4\leq k$  &$1/12+1/k$&\\
 $(3,5,k)$ & $5\leq k$&$1/30+1/k$&\\
$(3,6,k)$&$6\leq k$&$1/k$&\\
$(3,7,k)$&$7\leq k\leq41$&$1/k-1/42$&\\
$(3,8,k)$&$8\leq k\leq 23$&$1/k-1/24$&\\
$(3,9,k)$&$9\leq k\leq 17$&$1/k-1/18$&\\
$(3,10,k)$&$10\leq k\leq 14$&$1/k-1/15$&\\
$(3,11,k)$&$11\leq k\leq 13$&$1/k-5/66$&\\
$(4,4,k)$&$4\leq k$&$1/k$&\\
$(4,5,k)$&$5\leq k\leq 19$&$1/k-1/20$&\\
$(4,6,k)$&$6\leq k\leq 11$&$1/k-1/12$&\\
$(4,7,k)$&$7\leq k\leq 9$&$1/k-3/28$&\\
$(5,5,k)$&$5\leq k\leq 9$&$1/k-1/10$&\\
$(5,6,k)$&$6\leq k\leq 7$&$1/k-2/15$&\\
$(3,3,3,k)$&$3\leq k$&$1/k$&\\
$(3,3,4,k)$&$4\leq k\leq 11$&$1/k-1/12$&\\
$(3,3,5,k)$&$5\leq k\leq 7$&$1/k-2/15$&\\
$(3,4,4,k)$&$4\leq k\leq 5$&$1/k-1/6$&\\
$(3,3,3,3,k)$&$3\leq k\leq5$&$1/k-1/6$&\\
\hline
\multicolumn{4}{c}{}\\    
\end{tabular}

\textbf{\tablename~\thetable.} The patterns of a vertex with positive curvature.
\end{table}

\begin{table}
\refstepcounter{table}\label{tabl2}
\begin{tabular}{lllll}
\hline
$(3,7,42),$ & $(3,8,24),$ & $(3,9,18),$ & $(3,10,15),$ & $(3,12,12),$\\
$(4,5,20),$ & $(4,6,12),$ & $(4,8,8),$ & $(5,5,10),$ & $(6,6,6),$\\
$(3,3,4,12),$ & $(3,3,6,6),$& $(3,4,4,6),$ & $(4,4,4,4),$ &$(3,3,3,3,6),$\\
$(3,3,3,4,4),$&$(3,3,3,3,3,3).$&&&\\
\hline
\multicolumn{1}{c}{}\\    
\end{tabular}

\textbf{\tablename~\thetable.} The patterns of a vertex with vanishing curvature.
\end{table}

The following lemma is useful for our purposes, see \cite[Lemma~2.5]{MR2410938}.
\begin{lemma}\label{lemma}
If there is a face $\sigma$ such that $\mathrm{deg}(\sigma)\geq43$ and $\Phi(x)\geq0$ for
any vertex $x$ incident to $\sigma,$ then
$$\sum_{x\in V:x\prec \sigma}\Phi(x)\geq1.$$ 
\end{lemma}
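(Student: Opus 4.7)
The plan is to observe that the hypothesis $\deg(\sigma) \geq 43$ severely restricts the possible patterns that vertices of $\partial\sigma$ can have, and then to bound each individual curvature from below by $1/\deg(\sigma)$, so that summation over the $\deg(\sigma)$ boundary vertices produces the desired $\geq 1$.

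First I would set $n := \deg(\sigma) \geq 43$ and pick an arbitrary vertex $x \in \partial \sigma$. By hypothesis $\Phi(x) \geq 0$, so the pattern of $x$ must appear either in Table~\ref{tabl1} or in Table~\ref{tabl2}. Since $x \prec \sigma$, the pattern of $x$ contains an entry equal to $n \geq 43$. Scanning Table~\ref{tabl2}, every pattern of zero curvature has all face degrees at most $42$ (the maximum being $42$, attained in $(3,7,42)$), so $x$ cannot be a zero curvature vertex; its pattern must come from Table~\ref{tabl1}.

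Next I would list the patterns in Table~\ref{tabl1} that admit a face of degree $\geq 43$: the only such patterns are $(3,3,n)$, $(3,4,n)$, $(3,5,n)$, $(3,6,n)$, $(4,4,n)$, and $(3,3,3,n)$, with curvatures
\[
\tfrac{1}{6}+\tfrac{1}{n},\ \tfrac{1}{12}+\tfrac{1}{n},\ \tfrac{1}{30}+\tfrac{1}{n},\ \tfrac{1}{n},\ \tfrac{1}{n},\ \tfrac{1}{n}
\]
respectively. In every case $\Phi(x) \geq 1/n$, with equality achieved by the patterns $(3,6,n),(4,4,n),(3,3,3,n)$.

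Finally, since $\partial\sigma$ consists of exactly $n$ vertices, I would sum the pointwise bound to conclude
\[
\sum_{x \prec \sigma} \Phi(x) \;\geq\; n \cdot \frac{1}{n} \;=\; 1.
\]
The argument is essentially a direct table lookup; the only substantive step is the verification that the threshold $43$ is precisely what forces every admissible pattern to fall in the short sublist above, which is a purely numerical check against the two tables.
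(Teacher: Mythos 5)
Your proof is correct, and it is exactly the standard argument: the paper itself does not prove Lemma~\ref{lemma} but cites it from Chen--Chen \cite{MR2410938}, where the proof proceeds by the same table lookup. Your numerical check is right --- no pattern in Table~\ref{tabl2} admits an entry exceeding $42$, and the only families in Table~\ref{tabl1} admitting an entry $k\geq 43$ are $(3,3,k)$, $(3,4,k)$, $(3,5,k)$, $(3,6,k)$, $(4,4,k)$, $(3,3,3,k)$, each with curvature at least $1/k$ --- so summing over the $\deg(\sigma)$ boundary vertices gives the bound.
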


For an infinite planar graph $G$ with nonnegative curvature if there is a face of degree at least $43,$ then the graph has rather special
structure, see \cite[Theorem~2.10]{MR3318509}. As in the introduction, we call it an infinite prisim-like graph. 

The following observation is useful for our purposes.
\begin{observation}\label{ob:hex}
For any planar graph $G$ with nonnegative curvature, we divide hexagons into triangles, i.e. each hexagon is divided into $6$ triangles, such that the modified graph possesses no hexagons and still has nonnegative curvature. So that we may assume that $G$ has no hexagons.
\end{observation}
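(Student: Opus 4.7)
The plan is to make the subdivision explicit and verify the tessellation axioms and curvature formula hold. For each hexagonal face $\sigma\in F$ with boundary vertices $x_1,\dots,x_6$ in cyclic order, I would introduce a new vertex $v_\sigma$ in the interior of $\sigma$ and add the six edges $v_\sigma x_i$. In the resulting planar complex $G'$, the hexagon $\sigma$ is replaced by six triangles $\tau_i$ with $\partial\tau_i=\{v_\sigma,x_i,x_{i+1}\}$ (indices mod $6$). The tessellation properties (i)--(iii) continue to hold: each new triangle is a disk bounded by three edges; each new edge $v_\sigma x_i$ is shared exactly by $\tau_{i-1}$ and $\tau_i$; each old boundary edge of $\sigma$ is now shared by one of the $\tau_i$ and the face on its other side in $G$; and intersections of closed faces remain empty, a single vertex, or a single edge. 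Performing the operation at every hexagonal face simultaneously produces a planar tessellation $G'$ with no hexagons.

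The next step is to compare the combinatorial curvatures via \eqref{def:comb}. Let $x$ be an original vertex, and let $k$ be the number of hexagonal faces of $G$ incident to $x$. In passing from $G$ to $G'$, the degree of $x$ increases by $k$ (one new neighbor $v_\sigma$ per incident hexagon $\sigma$). Among the faces incident to $x$, the non-hexagonal ones are unchanged, while each hexagon $\sigma\ni x$, contributing $\frac{1}{6}$ to the facial sum, is replaced by the two triangles around $x$ inside $\sigma$, contributing $\frac{1}{3}+\frac{1}{3}=\frac{2}{3}$. Hence the net change in the curvature at $x$ is
$$-\frac{k}{2}+k\Bigl(\frac{2}{3}-\frac{1}{6}\Bigr)=0,$$
so $\Phi_{G'}(x)=\Phi_G(x)$ for every $x\in V$. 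For each new center vertex $v_\sigma$, one has $\deg(v_\sigma)=6$ and $v_\sigma$ is incident to six triangles, whence
$$\Phi_{G'}(v_\sigma)=1-\frac{6}{2}+6\cdot\frac{1}{3}=0.$$
Therefore $G'\in\NNG$, has no hexagons, and in fact $T_{G'}=T_G$ and $\Phi(G')=\Phi(G)$, so every quantity relevant to Problem~\ref{prob:main} is preserved by the reduction.

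The argument is a direct computation from \eqref{def:comb}, and I do not expect any substantive obstacle. The only mild point of care is that a single vertex $x$ can be incident to several hexagons of $G$; this is handled by parametrizing the count by $k$, and the contributions from distinct incident hexagons are independent, so the cancellation above applies globally. Consequently, throughout the rest of the paper we may freely assume that $G$ contains no hexagonal faces.
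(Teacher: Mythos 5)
Your construction (coning each hexagon from a new interior vertex) is exactly the subdivision the paper intends by ``each hexagon is divided into $6$ triangles,'' and your curvature bookkeeping is correct: the change at an old vertex incident to $k$ hexagons is $-\frac{k}{2}+k\left(\frac{2}{3}-\frac{1}{6}\right)=0$, and each new center vertex has pattern $(3,3,3,3,3,3)$ with zero curvature. The paper states this as an observation without proof, so your write-up is simply a complete verification of the same idea, including the useful extra remarks that $T_{G'}=T_G$ and $\Phi(G')=\Phi(G)$.
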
 We have the following theorem.

\begin{theorem}[\cite{MR3318509}]\label{thm:prminf}  Let $G=(V,E,F)$ be a planar graph with nonnegative curvature and $D_G\geq 43.$ Then there is only one face $\sigma$ of degree at least $43.$ Suppose that there is no hexagonal faces, e.g. by Observation~\ref{ob:hex}. Then the set of faces $F$ consists of $\sigma,$ and triangles and/or squares. Moreover,
$$S(G)=\sigma\cup(\cup_{i=1}^\infty L_i),$$ where $L_i,$ $i\geq 1,$ is a set of faces of the same type (triangle or square) which composite a band, i.e. an annulus, such that $$\min_{\substack{x\in \cup_{\tau\in L_{i}}\partial\tau, \\ y\in \partial\sigma}} d(x,y)=i-1,$$ where $d$ is the graph distance in $(V,E).$ And $S(G)$ is isometric to the boundary of a half flat-cylinder in $\mathbb{R}^3,$ see Figure~\ref{fig:half-cylinder}.

\begin{figure}[htbp]
\begin{center}
\includegraphics[width=\linewidth]{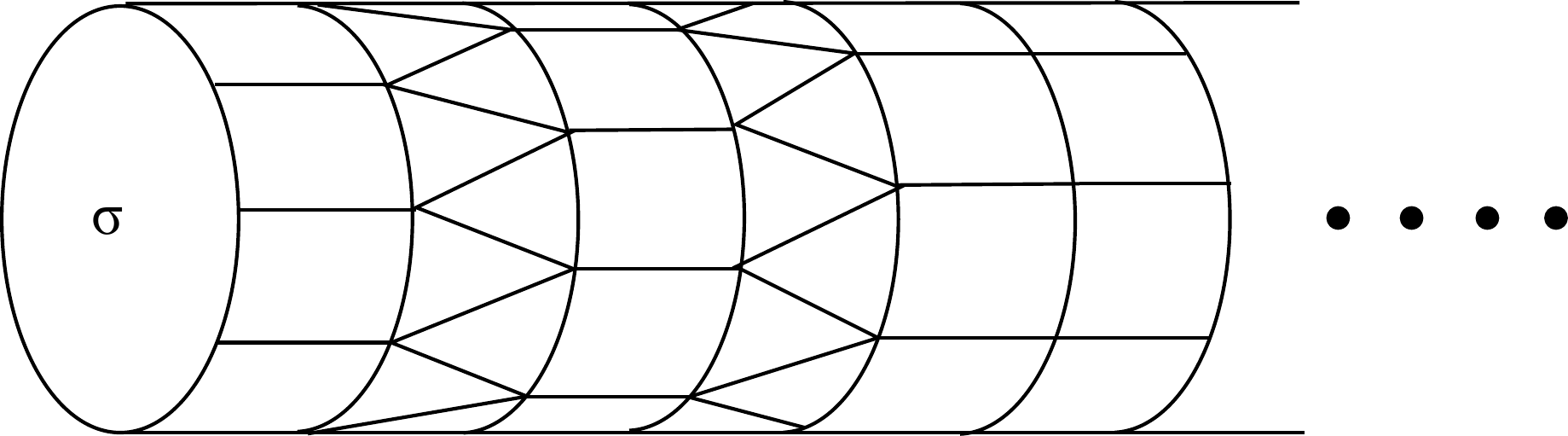}
\caption{\small A half flat-cylinder in $\mathbb{R}^3.$}
\label{fig:half-cylinder}
\end{center}
\end{figure}

\end{theorem}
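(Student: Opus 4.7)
The plan is to combine the Cohn--Vossen inequality \eqref{CVthm} with Lemma~\ref{lemma} in a very tight way, so that the entire curvature budget is forced onto $\partial\sigma$, and then to run a local combinatorial analysis of the allowed patterns in Table~\ref{tabl1} and Table~\ref{tabl2} to pin down the geometry.

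First I would prove uniqueness of the large face and vanishing of curvature away from it. Applying Lemma~\ref{lemma} to any face $\sigma$ with $\deg(\sigma)\ge 43$ yields $\sum_{x\in\partial\sigma}\Phi(x)\ge 1$; together with $\Phi(G)\le 1$ and $\Phi(x)\ge 0$ everywhere, this forces $\sum_{x\in\partial\sigma}\Phi(x)=1$ and $\Phi(x)=0$ for every $x\notin\partial\sigma$. If a second face $\sigma'$ had $\deg(\sigma')\ge 43$, axiom (iii) would give $|\partial\sigma\cap\partial\sigma'|\le 2$, and inclusion--exclusion combined with the crude bound $\Phi(x)\le 1/2$ (attained only at the pattern $(3,3,3)$, which is manifestly incompatible with incidence to a $43$-gon) would contradict $\Phi(G)\le 1$. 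So $\sigma$ is unique.

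Next, I would classify vertex patterns. For $x\in\partial\sigma$, writing $\Phi(x)\ge 0$ with one incident face of degree $\ge 43$ immediately forces $\deg(x)\in\{3,4\}$; scanning Table~\ref{tabl1} under the no-hexagon assumption leaves only the patterns $(3,3,\deg\sigma),(3,4,\deg\sigma),(3,5,\deg\sigma),(4,4,\deg\sigma),(3,3,3,\deg\sigma)$. Every interior vertex is zero-curvature, so its pattern lies in Table~\ref{tabl2}; after discarding hexagonal patterns only $(3,7,42),(3,8,24),(3,9,18),(3,10,15),(3,12,12),(4,5,20),(4,8,8),(5,5,10),(3,3,4,12),(4,4,4,4),(3,3,3,4,4)$, and $(3,3,3,3,3,3)$ remain. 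The goal is to show that only the last two can actually occur.

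The main obstacle is ruling out pentagons and mid-degree faces in the interior. The idea is that an occurrence of a pentagon forces an interior vertex with pattern $(4,5,20)$ or $(5,5,10)$, which brings in a $10$- or $20$-gon; the zero-curvature patterns available on the boundary of that new face (such as $(3,10,15)$ or $(5,5,10)$) force still another medium-degree face, and so on. I would show this chain can be neither closed combinatorially (each step forces a face of a specific larger or alternating degree that cannot wrap consistently) nor continued to infinity while remaining planar with $\Phi\equiv 0$ off $\partial\sigma$; the same elimination, applied to each of $\{5,7,8,9,10,12,15,18,20,24,42\}$, rules out all such faces. Hence $F\setminus\{\sigma\}$ consists of triangles and squares only, and the boundary patterns collapse to $(3,3,\deg\sigma),(4,4,\deg\sigma),(3,3,3,\deg\sigma)$.

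Finally, I would assemble the band structure. Define $L_i$ as the set of faces at combinatorial distance $i-1$ from $\partial\sigma$. The three admissible boundary patterns force $L_1$ to be either a ring of $\deg(\sigma)$ squares (pattern $(4,4,\deg\sigma)$) or a ring of $2\deg(\sigma)$ triangles (patterns $(3,3,\deg\sigma)$ and $(3,3,3,\deg\sigma)$ alternating), uniform in type. The only interior zero-curvature patterns compatible with an all-triangle or all-square incidence are $(3,3,3,3,3,3)$ and $(4,4,4,4)$ respectively, and these rigidly propagate the same type of band outward to $L_{i+1}$ for every $i\ge 1$. Each $L_i$ therefore forms a flat strip of circumference $\deg(\sigma)$; gluing them along $\partial\sigma$ makes $S(G)$ locally isometric to a flat half-cylinder of that circumference, which embeds in $\R^3$ as the boundary of a half flat-cylinder as depicted in Figure~\ref{fig:half-cylinder}. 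The hard part throughout is the case analysis in the third paragraph; the band-propagation step is then routine once the face-degree dichotomy has been established.
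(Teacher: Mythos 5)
Your opening move is right and matches the argument the paper itself runs for the finite analogue (Theorem~\ref{thm:finiteprism}): Lemma~\ref{lemma} together with the Cohn--Vossen bound \eqref{CVthm} forces $\sum_{x\in\partial\sigma}\Phi(x)=1$ and $\Phi\equiv 0$ off $\partial\sigma$, and no vertex can be incident to two faces of degree at least $43$, so $\sigma$ is unique. But you then throw away the most important consequence of this equality. Every vertex $x\prec\sigma$ satisfies $\Phi(x)\ge 1/\deg(\sigma)$ --- this is precisely how Lemma~\ref{lemma} is proved: the admissible patterns are $(3,3,k)$, $(3,4,k)$, $(3,5,k)$, $(3,6,k)$, $(4,4,k)$, $(3,3,3,k)$ with curvatures $\tfrac16+\tfrac1k$, $\tfrac1{12}+\tfrac1k$, $\tfrac1{30}+\tfrac1k$, $\tfrac1k$, $\tfrac1k$, $\tfrac1k$. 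Hence the equality $\sum_{x\in\partial\sigma}\Phi(x)=1$ over the $\deg(\sigma)$ boundary vertices forces $\Phi(x)=1/\deg(\sigma)$ at every one of them, i.e.\ pattern $(4,4,k)$ or $(3,3,3,k)$ once hexagons are removed; a short adjacency argument then shows all of $\partial\sigma$ carries the \emph{same} one of these two patterns. You instead keep $(3,3,k)$, $(3,4,k)$, $(3,5,k)$ in play, drop $(3,4,k)$ later without justification, and your final band assembly even proposes an alternation of $(3,3,k)$ with $(3,3,3,k)$ around $\sigma$ --- a configuration whose boundary curvature sums to $k/12+1>1$ and is therefore impossible. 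This is not cosmetic: the exact boundary pattern is the seed from which the entire band structure grows.

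The more serious problem is your plan for the step you yourself flag as the hard part: eliminating pentagons and all mid-degree faces by a local forcing chain around each such face, \emph{before} building the bands. That order cannot work. Zero-curvature patterns such as $(4,8,8)$ and $(3,12,12)$ are realized by genuine Archimedean tilings of the plane, so a region of octagons or dodecagons with $\Phi\equiv 0$ is locally consistent and continues to infinity without any contradiction; no analysis confined to a neighbourhood of such a face can kill it. What actually excludes these faces is propagation outward from $\sigma$: once $\partial\sigma$ is uniformly $(4,4,k)$ or $(3,3,3,k)$, the ring $L_1$ consists of squares or of triangles; its outer vertices have zero curvature and are incident to two squares (resp.\ three triangles) of $L_1$, which leaves only $(4,4,4,4)$ and $(3,3,3,4,4)$ (resp.\ $(3,3,3,4,4)$ and $(3,3,3,3,3,3)$) as possible patterns; these force $L_2$ to be again a uniform ring of squares or triangles, and induction on $i$ shows every face of $G$ lies in some $L_i$ and is a triangle or a square. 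The restriction on face degrees is thus a \emph{consequence} of the band propagation, not a prerequisite for it. Drop the chain argument, pin down the boundary patterns first via the curvature equality, and let the inductive band construction do the elimination for you; that is the route the paper's proof of Theorem~\ref{thm:finiteprism} takes and the one attributed to \cite{MR3318509} here.
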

We collect some properties of an infinite prism-like graph:
\begin{itemize}
\item There is only one face $\sigma$ of degree at least $43,$ and $\deg(\sigma)=D_G$; 
\item $T_G$ consists of all vertices incident to the largest face $\sigma;$ 
\item Any face, which is not $\sigma,$ is either a triangle, a square or a hexagon;
\item The polygonal surface $S(G)$ is isometric to the boundary of a half flat-cylinder in $\mathbb{R}^3.$
\item $\Phi(G)=1.$
 \end{itemize}

Next, we study finite prism-like graphs. Recall that a finite graph $G$ with nonnegative curvature is called prism-like if the number of faces with degree at least $43$ is at least two. 
\begin{theorem}\label{thm:finiteprism} Let $G=(V,E,F)$ be a finite prism-like graph. Then there are exactly two disjoint faces $\sigma_1$ and $\sigma_2$ of same facial degree at least $43.$ Suppose that there is no hexagonal faces, e.g. by Observation~\ref{ob:hex}. Then the set of faces $F$ consists of $\sigma_1$ and $\sigma_2,$ triangles or squares. Moreover,
$$S(G)=\sigma_1\cup(\cup_{i=1}^M L_i)\cup\sigma_2,$$ where $M\geq 1,$ $L_i,$ $1\leq i\leq M,$ is a set of faces of the same type (triangle or square) which composite a band, i.e. an annulus, such that $$\min_{\substack{x\in \cup_{\tau\in L_{i}}\partial\tau, \\ y\in \partial\sigma_1}} d(x,y)=i-1.$$ And $S(G)$ is isometric to the boundary of a cylinder barrel in $\mathbb{R}^3,$ see Figure~\ref{fig:barrel}.

\begin{figure}[htbp]
\begin{center}
\includegraphics[width=0.7\linewidth]{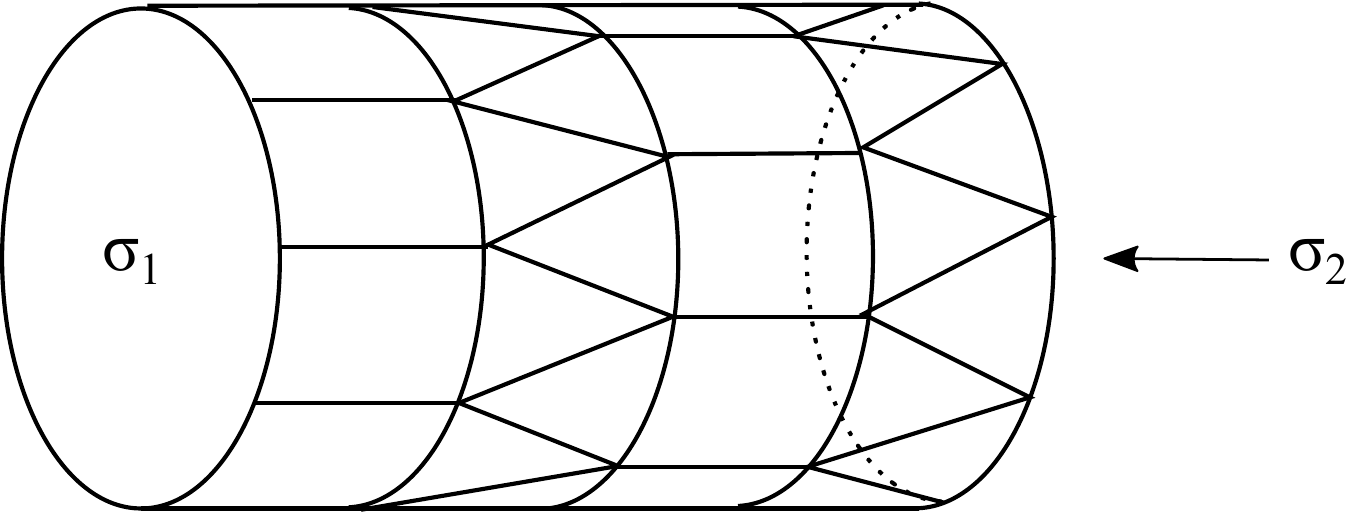}
\caption{\small A cylinder barrel in $\mathbb{R}^3.$}
\label{fig:barrel}
\end{center}
\end{figure}

\end{theorem}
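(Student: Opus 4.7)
The plan is to mirror the argument of Theorem~\ref{thm:prminf} from \cite{MR3318509}, replacing the single unbounded large face by the two large faces guaranteed by Definition~\ref{def:prism}(ii), and imposing a finite closure condition on the opposite side. First I would show that there are exactly two such large faces and that they are vertex-disjoint. Let $\Sigma = \{\sigma\in F : \deg(\sigma)\geq 43\}$; Lemma~\ref{lemma} gives $\sum_{x\prec \sigma}\Phi(x)\geq 1$ for each $\sigma\in\Sigma$, while Gauss--Bonnet gives $\Phi(G)=2$. Since $|\Sigma|\geq 2$ by hypothesis, a third element of $\Sigma$, or any shared vertex between two elements (after accounting for multiplicities and noting that no pattern in Table~\ref{tabl1} carries two face-degrees exceeding $42$) would force $\Phi(G)>2$. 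Hence $\Sigma=\{\sigma_1,\sigma_2\}$, the two faces are vertex-disjoint, equality holds in Lemma~\ref{lemma} on each boundary, and every vertex outside $\partial\sigma_1\cup\partial\sigma_2$ has $\Phi(x)=0$.

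Next I would determine which small faces may appear. By Observation~\ref{ob:hex} I may assume there are no hexagons. Interior vertices (those outside $\partial\sigma_1\cup\partial\sigma_2$) then have $\Phi=0$ and their patterns must come from Table~\ref{tabl2}; discarding patterns involving a hexagon leaves only $(4,4,4,4)$, $(3,3,3,4,4)$ and $(3,3,3,3,3,3)$. A face of intermediate degree $d\in\{5,\dots,42\}$ would have only interior corners, but no $\Phi=0$ pattern combines such a $d$ with triangles and squares, so no such face exists; consequently $F$ consists of $\sigma_1,\sigma_2$, triangles and squares.

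Third, I would analyze $\partial\sigma_1$ and then iterate outward. Setting $n = \deg(\sigma_1)$, the allowed patterns on $\partial\sigma_1$ (from Table~\ref{tabl1}, with $\Phi\geq 0$ and only triangles or squares as the remaining face degrees) are $(3,3,n)$, $(3,4,n)$, $(4,4,n)$ and $(3,3,3,n)$, with multiplicities $a,b,c,d$. The identity $\sum_{x\prec\sigma_1}\Phi(x)=1$ together with $a+b+c+d=n$ collapses to $a/6+b/12=0$, so $a=b=0$. Two adjacent vertices of $\partial\sigma_1$ share an edge whose non-$\sigma_1$ face $\tau$ appears in both patterns; since $(4,4,n)$ forces $\tau$ to be a square and $(3,3,3,n)$ forces it to be a triangle, $\partial\sigma_1$ carries a single pattern. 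Hence the first ring $L_1$ is a uniform band of either $n$ squares or $2n$ triangles, its outer boundary is again an $n$-cycle, and on that cycle the same local analysis (adjacent interior vertices must agree on the shared outer-face degree) forces $L_2$ to be uniform of one type, and so on. After finitely many such layers $L_1,\dots,L_M$ the outer boundary must close up as $\partial\sigma_2$, and the same boundary analysis on $\sigma_2$ forces $\deg(\sigma_2)=n=\deg(\sigma_1)$. This yields the decomposition $S(G)=\sigma_1\cup(\cup_{i=1}^M L_i)\cup\sigma_2$ with the stated distance formula. Intrinsically each band is a flat annulus (of height $\sqrt{3}/2$ for a triangle band, $1$ for a square band) of circumference $n$, so gluing the two regular $n$-gon caps produces a flat cylinder surface, which is isometric to the boundary of a right cylinder barrel in $\mathbb{R}^3$ as in Figure~\ref{fig:barrel}.

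The main obstacle is the iteration: the combinatorics of interfaces between triangular and square bands (mediated by $(3,3,3,4,4)$ vertices) must be handled carefully to rule out any non-uniformity within a single layer and to show that the separating cycle keeps the same length $n$ from ring to ring. This is exactly the heart of the infinite analogue proved in \cite{MR3318509}; the finite case requires in addition the closure condition that the iteration terminates precisely at $\partial\sigma_2$ with matching circumference, which, by the curvature accounting in Step~1, is automatic.
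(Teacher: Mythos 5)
Your overall strategy coincides with the paper's: Lemma~\ref{lemma} plus Gauss--Bonnet forces exactly two faces of degree at least $43$ with equality $\sum_{x\prec\sigma_i}\Phi(x)=1$ on each, hence $\Phi\equiv 0$ off $\partial\sigma_1\cup\partial\sigma_2$ and (after excluding hexagons) only the minimal patterns $(4,4,k_i)$ and $(3,3,3,k_i)$ on the boundaries; the adjacent-vertex argument then makes each boundary monochromatic, the bands $L_i$ are built inductively, and finiteness forces closure at $\sigma_2$ with $\deg(\sigma_1)=\deg(\sigma_2)$. That is exactly the paper's proof, which likewise defers the detailed band propagation to the argument of Theorem~\ref{thm:prminf} in the cited reference.

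There is, however, one step in your second paragraph that fails as written. You claim that discarding hexagonal patterns from Table~\ref{tabl2} leaves only $(4,4,4,4)$, $(3,3,3,4,4)$ and $(3,3,3,3,3,3)$, and conclude that no face of degree $d\in\{5,\dots,42\}$ can exist because ``no $\Phi=0$ pattern combines such a $d$ with triangles and squares.'' In fact the hexagon-free vanishing-curvature patterns also include $(3,7,42)$, $(3,8,24)$, $(3,9,18)$, $(3,10,15)$, $(3,12,12)$, $(4,5,20)$, $(4,8,8)$, $(5,5,10)$ and $(3,3,4,12)$, so an interior $12$-gon surrounded by $(3,12,12)$-corners, say, or a $20$-gon with $(4,5,20)$-corners, is not excluded by any purely local inspection of interior vertex types. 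The elimination of intermediate-degree faces cannot be done this way; it has to come out of the outward propagation from $\partial\sigma_1$ (whose vertices are already known to see only triangles or squares besides $\sigma_1$), layer by layer, which is precisely the content of the band argument you and the paper both borrow from the infinite case. Since your third paragraph performs that propagation anyway, the flawed paragraph is redundant rather than fatal, but as stated it is incorrect and should be removed or replaced by the observation that the face classification is a consequence of the layer structure, not an input to it. (Minor related point: your list of admissible boundary patterns should a priori include $(3,5,n)$, which is only eliminated by the same equality count $a/6+b/12+e/30=0$, not by a prior exclusion of pentagons.)
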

\begin{proof} We adopt the same argument as in the proof of Theorem~\ref{thm:prminf} in \cite{MR3318509}. Since the curvature is nonnegative, one easily sees that the boundaries of any two faces of degree at least $43$ are disjoint. By Lemma~\ref{lemma}, for any face $\sigma$ of degree at least $43,$ $$\sum_{x\in V:x\prec \sigma}\Phi(x)\geq1.$$ Since the graph is finite, by Gauss-Bonnet theorem, $\Phi(G)=2.$ Hence there are exactly two faces of degree at least $43,$ denoted by $\sigma_1$ and $\sigma_2.$ Moreover, we have $$\sum_{x\in V:x\prec \sigma_i}\Phi(x)=1,\quad i=1,2,$$ and $\Phi(x)=0,$ for any $x\in V\setminus(\partial\sigma_1\cup\partial\sigma_2).$ For any $x\in \partial\sigma_1\cup\partial\sigma_2,$ the pattern of $x$ is $(4,4,k_i)$ or $(3,3,3,k_i),$ where $k_i=\mathrm{deg}(\sigma_i)$ for $i=1,2$. Suppose that there is a vertex $x\in \partial\sigma_1$ whose pattern is $(4,4,k_1)$ ($(3,3,3,k_1)$ resp.), then the patterns of all vertices in $\partial\sigma_1$ are $(4,4,k_1)$ ($(3,3,3,k_1)$ resp.). Same results hold for $\partial\sigma_2.$ We denote by $L_1$ the set of all faces incident to a vertex of $\sigma_1,$ and different from $\sigma_1,$ which composite a band. Inductively,  for any $i\geq 1$ define $L_{i+1}$ as the set of faces incident to a vertex of a face in $L_{i},$ which are not in $\sigma_1\cup(\cup_{j=1}^{i-1}L_j).$ Note that $L_i,$ $i\geq 1$, consists of faces of same degree, triangles or squares. Since the graph is finite, there is some $M$ such that $\sigma_2$ is incident to some face in $L_M.$ Considering the properties of faces incident to $\sigma_2$ similarly, we have all faces in $L_M$ are incident to $\sigma_2.$ This yields that $\deg(\sigma_1)=\deg(\sigma_2)$ and proves the theorem.  
\end{proof}
A finite prism-like graph $G$ has the following properties:
\begin{itemize}
\item There are exactly two faces $\sigma_1$ and $\sigma_2$ of degree at least $43;$
\item $\deg(\sigma_1)=\deg(\sigma_2);$  
\item $T_G$ consists of all vertices in $\partial\sigma_1\cup\partial \sigma_2;$ 
\item Any face, which is not $\sigma_1$ and $\sigma_2,$ is either a triangle, a square or a hexagon;
\item The polygonal surface $S(G)$ is isometric to the boundary of a cylinder barrel in $\mathbb{R}^3.$
 \end{itemize}


\section{Upper bound estimates for the size of $T_G$}\label{section:3}
In this section, we prove the upper bound estimates for the number of vertices in $T_G$ for planar graph $G$ with nonnegative curvature.

\begin{definition}
We say that a vertex $x\in T_G$ is bad if $0<\Phi(x)<\frac{1}{132}$, and good if $\Phi(x)\geq\frac{1}{132}$.
\end{definition}

Let $G\in \NNG$ be either an infinite graph or a finite graph with $D_G< 132,$ which is not a prism-like graph. Then direct computation shows that all patterns of bad vertices of $G$ are given by
$$(3,7,k),\ 32\leq k\leq41, \quad(3,8,k),\ 21\leq k\leq23, \quad(3,9,k),\ 16\leq k\leq 17$$
$$(3,10,14),\quad (3,11,13), \quad(4,5,k),\ 18\leq k\leq19,\quad(4,7,9).$$

Our main tool to prove the results is the discharging method, see e.g. \cite{MR0543792,MR1441258,MR1897390,MR2299456,MR3624614,Gh17}. The curvature at vertices of a planar graph can be regarded as the charge concentrated on vertices. The discharging method is to redistribute the charge on vertices, via transferring the charge on good vertices to bad vertices, such that the final/terminal charge on involved vertices is uniformly bounded below. In the following, we don't distinguish the charge with the curvature. We need to show that for each bad vertex in $T_G$, one can find some nearby vertices with a fair amount of curvature, from which the bad vertex will receive some amount of curvature. By the discharging process, we get the final curvature uniformly bounded below on $T_G$ by the constant $\frac{1}{132}$, so that it yields the upper bound of the cardinality of $T_G$ by the upper bound of total curvature. Generally speaking, since the discharging method is divided into several steps, one shall check that there remains enough amount of curvature at the vertices which are involved in two or more steps to contribute the curvature. However, in our discharging method, we distribute the curvature at each vertex only once.
 
\begin{proof}[Proof of Theorem \ref{main thm} (Upper bound)]
In the following, we prove the upper bound estimate 
$$K_{\R^2}\leq132.$$ Let $G\in \NNG$ be an infinite graph which is not a prism-like graph. {We will introduce a discharging process to distribute some amount of the curvature at good vertices to bad vertices via the discharging rules. We denoted by $\Phi$ the curvature at vertices of $G$ and by $\wt{\Phi}$ the final curvature at vertices after carrying out the following discharging rules. We will show that the final curvature satisfies 
$\sum_{x\in V}\wt{\Phi}(x)=\sum_{x\in V}\Phi(x),$ and $$\wt{\Phi}(x)\geq \frac{1}{132},\quad \forall x\in T_G.$$ This will imply the upper bound estimate $\sharp T_G\leq 132$ since the total curvature $\Phi(G)\leq 1.$} 

The proof consists of three steps: In the first step, we consider the cases for bad vertices, $(3,8,k), 21\leq k\leq23$, $(3,9,k), 16\leq k\leq 17$, $(3,10,14)$ and $(3,11,13)$; In the second step, we deal with the cases, $(3,7,k), 32\leq k\leq41$ and $(4,7,9)$; The last step is devoted to the cases, $(4,5,k), 18\leq k\leq19$.

{\bf Step 1.} In this step, we divide it into the following three cases:
\begin{description}
\item[Case 1.1] There is at least one bad vertex on some $k$-gon with $21\leq k\leq23$. In this case, the pattern of those bad vertices must be $(3,8,k).$ Fix a bad vertex $a$ of the $k$-gon, see Figure \ref{3-8-k}. Then we {will show} that the vertices $x,y$ of the octagon in Figure \ref{3-8-k} and the good vertices on the $k$-gon have enough curvature to be distributed to {all} bad vertices of $k$-gon such that the final curvature {$\wt{\Phi}$} at vertices involved is greater than $\frac{1}{132}.$ In fact, all the possible patterns of $x$ and $y$
are \begin{equation}\label{eq:c2}(3,m,8), 3\leq m\leq 8, (3,3,3,8)\ \mathrm{and}\ (3,3,4,8).\end{equation}

\begin{figure}[htbp]
\begin{center}
\includegraphics[width=0.4\linewidth]{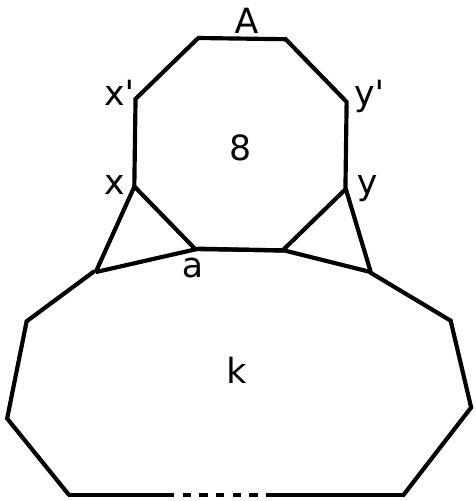}
\caption{\small The case for bad vertices of the pattern $(3,8,k)$ for $21\leq k\leq23$.}
\label{3-8-k}
\end{center}
\end{figure}

Note that the curvature of vertices on the $k$-gon is at least $\frac{1}{k}-\frac{1}{24}$, and for $21\leq k\leq23$, 
\begin{equation}\label{eq:c1}2\times\frac{13}{176}+k\left(\frac{1}{k}-\frac{1}{24}\right)\geq\frac{k+2}{132}.\end{equation} By the above equation \eqref{eq:c1}, if we can find sets of good vertices on the $k$-gon, $A_x$ and $A_y,$ satisfying that 
\begin{itemize}
\item $A_x\cap A_y=\emptyset,$
\item $\Phi(x)+\sum\limits_{z\in A_x}\Phi(z)-\sharp A_x\cdot\left(\frac{1}{k}-\frac{1}{24}\right)>\frac{13}{176}$ and
\item $\Phi(y)+\sum\limits_{z\in A_y}\Phi(z)-\sharp A_y\cdot\left(\frac{1}{k}-\frac{1}{24}\right)>\frac{13}{176},$
\end{itemize}
then $x$, $y$, $A_x$ and $A_y$ have enough curvature to be distributed to all bad vertices of the $k$-gon. {In the following, we try to find the sets $A_x$ and $A_y$ case by case.}

Note that all possible patterns {listed in \eqref{eq:c2}} except $(3,3,4,8)$ have curvature at least $\frac{1}{12}$ which is strictly greater than $\frac{13}{176}$. 
{First, we consider the pattern of the vertex $x.$ It splits into the following two subcases:}
\begin{description}
\item[Subcase 1.1.1] $x$ is not of the pattern $(3,3,4,8)$. We take $A_x=\emptyset$. 
\item[Subcase 1.1.2] $x$ is of the pattern $(3,3,4,8)$. In this situation, we have three cases, as depicted in Figure \ref{3-8-possible}. In either case, $\Phi(z)\geq\frac{1}{k},$ i.e. $z$ has a fair amount of curvature, and
$$\Phi(x)+\Phi(z)-\left(\frac{1}{k}-\frac{1}{24}\right)\geq\frac{1}{12}>\frac{13}{176}.$$
We take $A_x=\{z\}$.
\begin{figure}[tb]
\begin{minipage}[b]{0.3\textwidth}
\centering
\includegraphics[width=3.5cm,height=3.5cm]{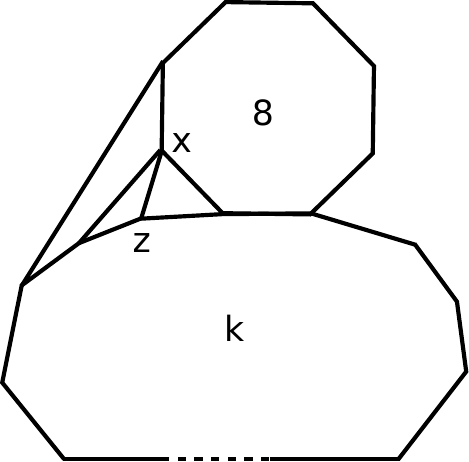}
(a)
\end{minipage}
\begin{minipage}[b]{0.3\textwidth}
\centering
\includegraphics[width=3.5cm,height=3.5cm]{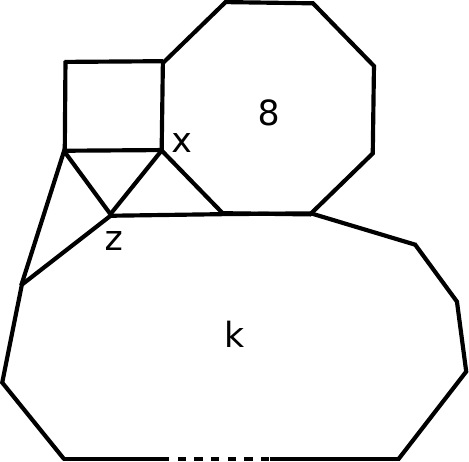}
(b)
\end{minipage}
\begin{minipage}[b]{0.3\textwidth}
\centering
\includegraphics[width=3.5cm,height=3.5cm]{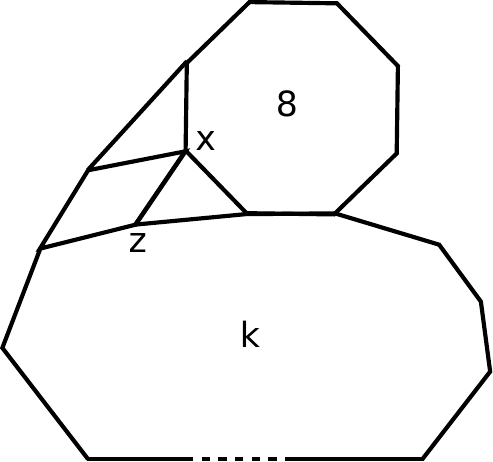}
(c)
\end{minipage}
\caption{\small Three situations in Subcase 1.1.2.}
 \label{3-8-possible}
\end{figure}
\end{description}

Considering the pattern of the vertex $y,$ we may choose the set $A_y$ similarly as $A_x$ above. It is easy to check that $A_x$ and $A_y$ satisfy the conditions as required. This finishes the proof for this case.

%
\begin{remark}
Note that the only edge of the octagon which could be incident to another $l$-gon with $21\leq l\leq23$ is the edge $A,$ see Figure \ref{3-8-k}. Hence we can distribute the curvature of vertices $x'$ and $y'$ to bad vertices on the $l$-gon. This means that the curvature at every vertex on the octagon is transferred to other bad vertices in the discharging process no more than once.
\end{remark}

\item[Case 1.2]  There is at least one bad vertex on some $k$-gon with $16\leq k\leq17$. In this case, the pattern of bad vertices must be $(3,9,k).$ 
The situation is similar to Case 1.1. We can argue verbatim as above to conclude the results, and hence omit the proof here.

\item[Case 1.3]  There is at least one bad vertex on some $14$-gon. In this case, the pattern of bad vertices must be $(3,10,14).$ 
We can use a similar argument as in Case 1.1 and omit the proof here.

\item[Case 1.4] There is at least one bad vertex on some tridecagon, denoted by $\sigma$. In this case, the pattern of bad vertices must be $(3,11,13),$ see Figure \ref{3-11-13}. Note that all possible patterns of $x$ and $y$
are 
$$(3,m,11), 3\leq m\leq 11, (3,3,3,11)\ \mathrm{and}\ (3,3,4,11).$$

\begin{figure}[htbp]
\begin{center}
\includegraphics[width=0.4\linewidth]{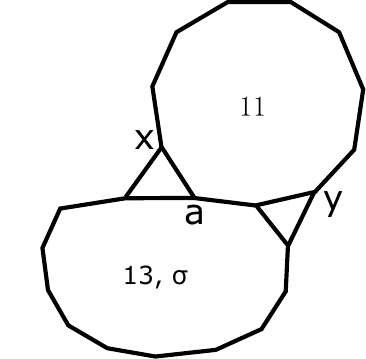}
\caption{\small The case for bad vertices of the pattern $(3,11,13)$.}
\label{3-11-13}
\end{center}
\end{figure}

Note that the curvature of vertices on the tridecagon $\sigma$ is at least $\frac{1}{858}$ and the following holds,
$$2\times\frac{13}{264}+13\times\frac{1}{858}{=}\frac{13+2}{132}.$$
Suppose that we can find sets of good vertices, $A_x,$ $A_y,$ $B_x$ and $B_y,$ satisfying that
\begin{itemize}
\item {$(A_x\cup A_y)\subset \partial \sigma, (B_x\cup B_y)\cap (\partial\sigma\cup \{x,y\})=\emptyset,$}
\item $(A_x\cup B_x)\cap(A_y\cup B_y)=\emptyset,$
\item $\Phi(x)+\sum\limits_{z\in A_x}\Phi(z)-\frac{\sharp A_x}{858}+\sum\limits_{z\in B_x}\Phi(z)-\frac{\sharp B_x}{132}>\frac{13}{264}$ and
\item $\Phi(y)+\sum\limits_{z\in A_y}\Phi(z)-\frac{\sharp A_y}{858}+\sum\limits_{z\in B_y}\Phi(z)-\frac{\sharp B_y}{132}>\frac{13}{264}.$
\end{itemize}
Then $x$, $y$, $A_x$, $A_y$, $B_x$ and $B_y$ have enough curvature to be distributed to all bad vertices of the tridecagon.


For our purposes, we first consider the case that the pattern of $x$ is not of the pattern $(3,11,11).$ Then we have the following cases, $\mathbf{A}$--$\mathbf{D}$. 
\begin{description}
\item[A] The pattern of $x$ is one of $(3,3,11)$, $(3,4,11)$, $(3,5,11)$, $(3,6,11)$, $(3,7,11)$ and $(3,3,3,11)$. In this subcase, since $\Phi(x)>\frac{13}{264}$,
we take $A_x=\emptyset$ and $B_x=\emptyset$. 
\item[B] $x$ is of the pattern $(3,8,11)$ or $(3,9,11),$ see Figure \ref{3-8-9-11}. In this subcase, noting that $\Phi(x)+\Phi(z)-\frac{1}{858}>\frac{13}{264}$, we can take $A_x=\{z\}$ and $B_x=\emptyset$.
\begin{figure}[htbp]
\begin{center}
\includegraphics[width=0.4\linewidth]{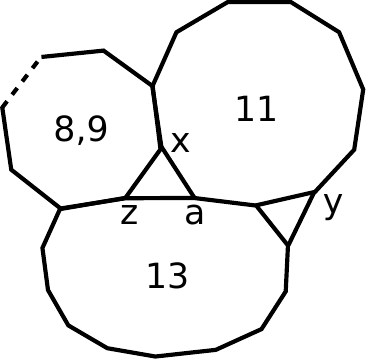}
\caption{\small The case that the pattern of $x$ is $(3,8,11)$ or $(3,9,11)$.}
\label{3-8-9-11}
\end{center}
\end{figure}
\item[C] $x$ is of the pattern $(3,10,11),$ see Figure \ref{3-10-11}. In this subcase, noting that {$\Phi(w')\geq\frac{1}{60}$,} we have
$$\Phi(x)+\Phi(z)+\Phi(w)+\left(\Phi(w')-\frac{1}{132}\right)-\frac{2}{858}>\frac{13}{264}.$$
We can take $A_x=\{z,w\}$ and $B_x=\{w'\}$.
Note that in this situation, the curvature at the vertex $w'$ is not distributed to the vertices of some $14$-gon or other tridecagon, since the edge $A$ cannot be incident to any $14$-gon or tridecagon. That is, we only distribute the curvature of $w'$ to other vertices once. 
\begin{figure}[htbp]
\begin{center}
\includegraphics[width=0.4\linewidth]{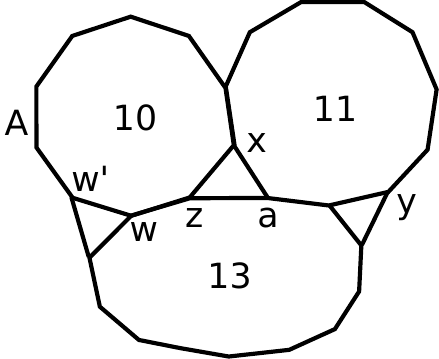}
\caption{\small The case that the pattern of $x$ is $(3,10,11)$.}
\label{3-10-11}
\end{center}
\end{figure}
\item[D] $x$ is of the pattern $(3,3,4,11)$. There are three possible situations, as depicted in Figure \ref{3-11-possible}. In either case, $\Phi(z)\geq\frac{1}{13}$, hence
$$\Phi(x)+\Phi(z)-\left(\frac{1}{13}-\frac{5}{66}\right)\geq\frac{13}{264}.$$
We take $A_x=\{z\}$ and $B_x=\emptyset$.
\begin{figure}[tb]
\begin{minipage}[b]{0.3\textwidth}
\centering
\includegraphics[width=3.5cm,height=3.5cm]{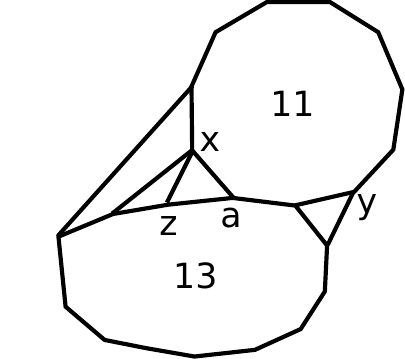}
(a)
\end{minipage}
\begin{minipage}[b]{0.3\textwidth}
\centering
\includegraphics[width=3.5cm,height=3.5cm]{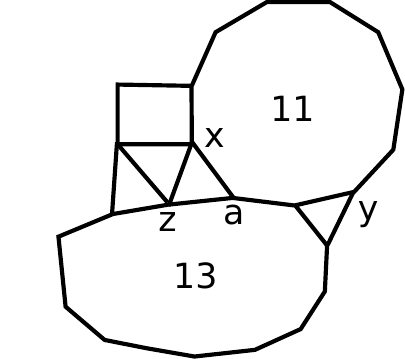}
(b)
\end{minipage}
\begin{minipage}[b]{0.3\textwidth}
\centering
\includegraphics[width=3.5cm,height=3.5cm]{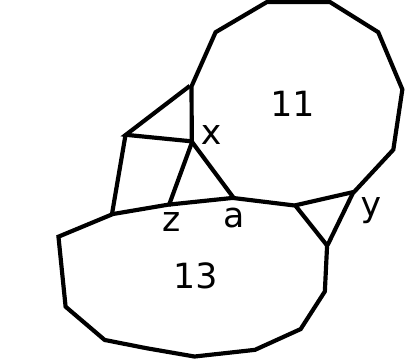}
(c)
\end{minipage}
\caption{\small The case that the pattern of $x$ is $(3,3,4,11)$.}
 \label{3-11-possible}
\end{figure}
\end{description}

In case that $y$ is not of the pattern $(3,11,11),$ we can take $A_y$ and $B_y$ according to the above cases, $\mathbf{A}$--$\mathbf{D},$ similarly.

Considering the patterns of $x$ and $y$ together, we have the following subcases for Case 1.4: 
\begin{description}
\item[Subcase 1.4.1] Both of $x$ and $y$ are not of the pattern $(3,11,11).$ In this subcase, we choose $A_x,$ $B_x,$ $A_y$ and $B_y$ according to the cases $\mathbf{A}$--$\mathbf{D}$ mentioned above.
\item[Subcase 1.4.2] One of $x$ and $y$ is of the pattern $(3,11,11)$ and the other is not. Without loss of generality, we may assume that $x$ is of the pattern $(3,11,11)$ and $y$ is not. We can take $A_y$ and $B_y$ according to the cases $\mathbf{A}$--$\mathbf{D}$ above, since the pattern of $y$ is not $(3,11,11).$ Consider the patterns of vertices which have a neighbor on the boundary of the tridecagon, e.g. $x_1,x_2$ etc. as depicted in Figure~\ref{3-11-11}.  If $x_1$ is not of the pattern $(3,11,11)$, then we can take $A_{x_1}$ and $B_{x_1}$ for the vertex $x_1$ according to the cases $\mathbf{A}$--$\mathbf{D}$ above. This yields four sets of good vertices $A_{x_1},$ $B_{x_1},$ $A_{y}$ and $B_{y}$ satisfying the properties as required and proves the result. So that it suffices to consider the case that $x_1$ is of the pattern $(3,11,11).$ If we further consider the vertices $x_2$ and $x_3$ by the same argument as above consecutively, as depicted in Figure~\ref{3-11-11}, then it suffices to consider that $x_2$ and $x_3$ are of the pattern $(3,11,11).$ Now we consider the vertex $x_4.$ If it is not of the pattern $(3,10,11)$ or $(3,11,11)$, then we can take $A_{x_4}$ and $B_{x_4}$ according to the cases $\mathbf{A}$--$\mathbf{D}.$ {Note that in this case {$A_{x_4}=\emptyset$ or $\{c\}$ and }$B_{x_4}=\emptyset$. Hence $A_{x_4}$, $B_{x_4}$, $A_y$ and $B_y$ satisfy the properties as required.} Suppose that $x_4$ is of the pattern $(3,10,11)$ or $(3,11,11)$, then we get that the pattern of $b$ is $(3,3,3,13)$ and $\Phi(b)=\frac{1}{13}$. The vertices $x$, $x_1$ and $b$ have enough curvature to be distributed to all bad vertices on the tridecagon, since
$$\frac{2}{66}+\frac{1}{13}+\frac{12}{858}>\frac{13+2}{132}.$$
\begin{figure}[htbp]
\begin{center}
\includegraphics[width=0.4\linewidth]{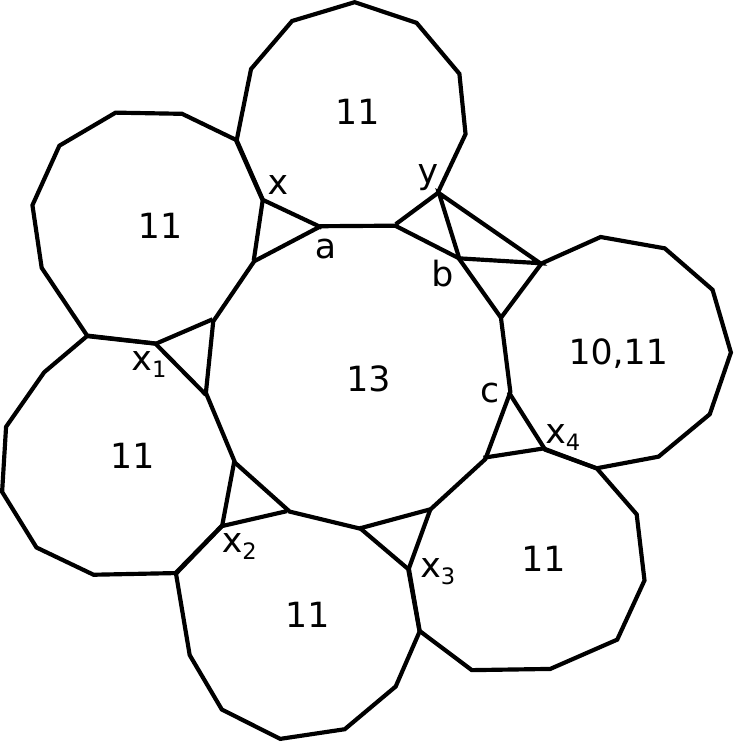}
\caption{\small The case that the pattern of $x$ is $(3,11,11)$.}
\label{3-11-11}
\end{center}
\end{figure}
\item[Subcase 1.4.3] Both of $x$ and $y$ are of the pattern $(3,11,11).$ 
In this subcase, noting that $13$ is an odd number, it is impossible that all vertices, which have a neighbor on the boundary of the tridecagon, are of the pattern $(3,11,11),$ see e.g. Figure~\ref{3-11-11}. Hence, we can find two vertices $x',y',$ which have a neighbor on the boundary of the tridecagon and are incident to a common hendecagon, such that one of them is of the pattern $(3,11,11)$ and the other is not. This reduces this subcase to Subcase 1.4.2 by renaming $x'$ and $y'$ to $x$ and $y.$  
\end{description}

\end{description}

{\bf Step 2.} In this step, we consider bad vertices on the heptagons, i.e. those of the patterns $(3,7,k),$ $32\leq k\leq41,$ and $(4,7,9)$. Note that for any two heptagons whose boundaries have non-empty intersection, since the curvature is nonnegative, they must share a common edge, i.e. they are neighbors in the dual graph $G^*.$  We consider connected components of the induced graph on vertices of degree seven in $G^*,$ which corresponds to the heptagons in $G.$ Let $\mathcal{S}$ be such a connected component. We divide it into the following cases.

\begin{description}
\item[Case 2.1] $\sharp\mathcal{S}=n\geq 2.$ We denote by $\{\sigma_i\}_{i=1}^n$ the set of vertices in $\mathcal{S},$ where $\sigma_i,$ $1\leq i\leq n$, are heptagons in $G.$ Let $m$ be the number of edges in $\mathcal{S},$ which corresponds to the number of edges shared by the heptagons $\{\sigma_i\}_{i=1}^n.$ We denote by $\{e_i\}_{i=1}^m$ the set of edges shared by the heptagons $\{\sigma_i\}_{i=1}^n$ in $G.$ Hence, the number of vertices in $G$ incident to $\cup_{i=1}^n\sigma_i$ is $7n-2m,$ since any vertex in $G$ is incident to at most two heptagons.  
The two vertices of any edge in $\{e_i\}_{i=1}^m$ are of the pattern $(3,7,7)$ or $(4,7,7),$ hence the curvature at these vertices is at least $\frac{1}{28}$. This means that there are $2m$ vertices with the curvature at least $\frac{1}{28}$. Note that for any heptagon, the faces adjacent to it consists of at most one $k$-gon for $32\leq k\leq42.$ Suppose that there are $s$ heptagons in $\{\sigma_i\}_{i=1}^n$ adjacent to a $k$-gon with $32\leq k\leq 41$ and there are $t$ heptagons in $\{\sigma_i\}_{i=1}^n$ adjacent to a $42$-gon. Clearly $s+t\leq n$. Hence, there are $2s$ vertices with the curvature at least $\frac{1}{1722}$ and $2t$ vertices with vanishing curvature. The curvature of other vertices is at least $\frac{1}{252}$. Since $m\geq n-1>\frac{46}{109}n$ for $n\geq2,$ we have
\begin{eqnarray*}
\frac{2m}{28}+\frac{2s}{1722}+\frac{7n-2m-2(s+t)}{252}
\geq\frac{2m}{28}+\frac{5n-2m}{252}>\frac{7n-2m}{132}.
\end{eqnarray*}
This means that good vertices on these heptagons have enough curvature to be distributed to bad vertices on them.

\begin{figure}[tb]
\begin{minipage}[b]{0.3\textwidth}
\centering
\includegraphics[width=3.5cm,height=3cm]{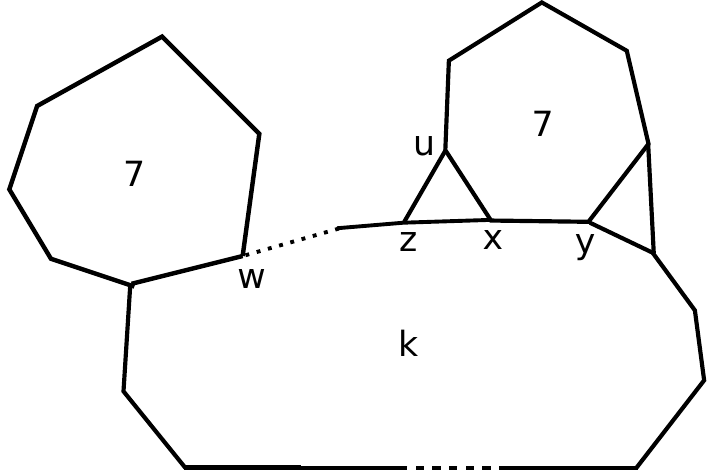}
(a)
\end{minipage}
\begin{minipage}[b]{0.3\textwidth}
\centering
\includegraphics[width=3.3cm,height=3.5cm]{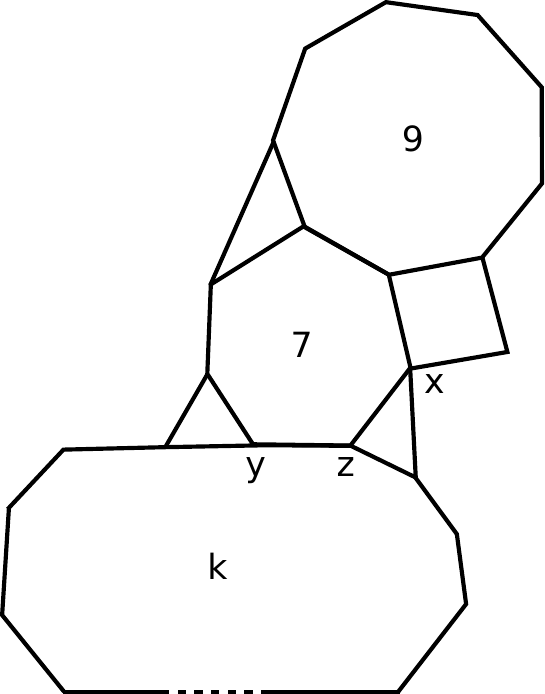}
(b)
\end{minipage}
\begin{minipage}[b]{0.3\textwidth}
\centering
\includegraphics[width=3.3cm,height=3.7cm]{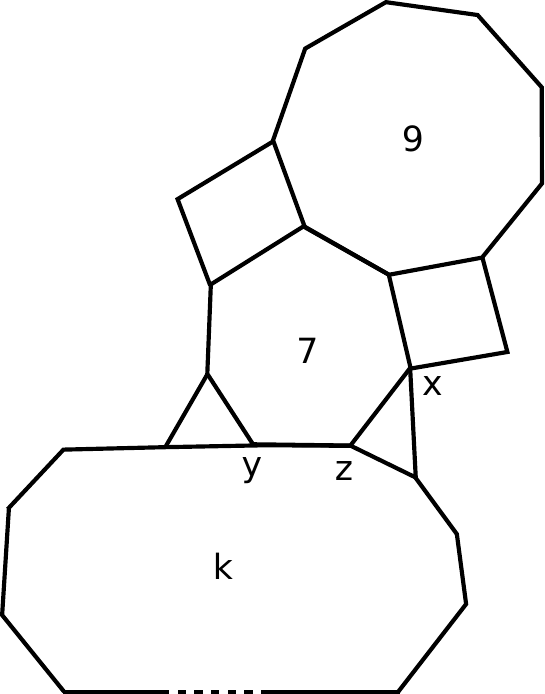}
(c)
\end{minipage}
\caption{\small The heptagon is adjacent to a $k$-gon for $32\leq k\leq41$.}
 \label{4-7-9}
\end{figure}

\item[Case 2.2] $\sharp\mathcal{S}=1.$ For the heptagon in $\mathcal{S},$ the intersection of its boundary and the boundary of any other heptagon in $G$ is empty. We consider this isolated heptagon and divide it into the following three subcases:

\begin{description}
\item [Subcase 2.2.1] The heptagon is adjacent to a $k$-gon for $32\leq k\leq41$, and there is no vertices on the heptagon of the pattern $(4,7,9),$ see Figure \ref{4-7-9}(a).
In this subcase, all possible patterns of vertices on the $k$-gon are $(3,3,k)$, $(3,4,k)$, $(3,5,k)$, $(3,6,k)$, $(3,7,k)$, $(4,4,k)$ and $(3,3,3,k).$ Except the pattern $(3,7,k)$, all patterns have the curvature at least $\frac{1}{k}$. 
Note that the pattern of $z$ cannot be $(3,7,k).$ Otherwise, the vertex $u,$ as depicted in Figure~\ref{4-7-9}(a), would be of pattern $(3,7,7)$ which will imply that two heptagons are adjacent to each other and yields a contradiction to the assumption. Hence $\Phi(z)\geq\frac{1}{k}$. Denote by $w$ the first vertex on the boundary of the $k$-gon in the direction from $y$ to $x$ which is of the pattern $(3,7,k)$. If the heptagon which contains $w$ belongs to Case 2.1, i.e. it is incident to another heptagon, then we distribute the curvature at $z$ only to $x$. Otherwise, we distribute the curvature at $z$ to both $x$ and $w$. For the vertex $y,$ we can use a similar argument to derive the result.
\item [Subcase 2.2.2] The heptagon is adjacent to a $k$-gon for $32\leq k\leq41$, and there exists at least one vertex on the heptagon of the pattern $(4,7,9)$. In this subcase,  there are two situations, see Figure \ref{4-7-9}(b, c). In either case, note that the pattern of $x$ must be $(3,3,4,7)$ and $\Phi(x)=\frac{5}{84}$, which is enough to be distributed to bad vertices of the pattern $(4,7,9)$, since there are at most two such bad vertices on the heptagon. And bad vertices of the pattern $(3,7,k),$ i.e. the vertices $y,z$ in Figure \ref{4-7-9} (b, c), can be treated in the same way as in Subcase 2.2.1. 

\item [Subcase 2.2.3] The heptagon is not adjacent to any $k$-gon for $32\leq k\leq41.$ That is, all bad vertices are of the pattern $(4,7,9)$. Fix a vertex $x$ on the heptagon of the pattern $(4,7,9),$ see Figure \ref{4-7-9-1}. For the vertex $y,$ we have the following possible situations:
\begin{description}
\item[{Subcase 2.2.3(A)}] If the pattern of $y$ is $(3,7,9)$ and $z$ is not of the pattern $(3,7,l)$ for $16\leq l\leq 17$, then the curvature at $y$ has not been distributed to other vertices before. And by the fact that $\Phi(y)=\frac{11}{126}>\frac{7}{132}$, $y$ has enough curvature to be distributed to all bad vertices on the heptagon.

If the pattern of $y$ is $(3,7,9)$ and $z$ is of the pattern $(3,7,l)$ for $16\leq l\leq 17,$ see Figure \ref{4-7-9-1}(a), then the curvature at $y$ might have been distributed to bad vertices of the $l$-gon in Step 1. However, since $z$ and $w,$ as depicted in Figure~\ref{4-7-9-1}(a), must be of the pattern $(3,7,l)$ and $\min\{\Phi(z),\Phi(w)\}\geq\frac{25}{714}$, they have enough curvature to be distributed to all bad vertices of the heptagon.

\item[{Subcase 2.2.3(B)}] If the pattern of $y$ is $(4,7,9)$, then we consider the vertex $z,$ see Figure \ref{4-7-9-1}(b). By the assumption, $z$ cannot be of pattern $(4,7,7)$. If $z$ is not of the pattern $(4,7,8)$ or $(4,7,9)$, then $\Phi(z)\geq\frac{5}{84}>\frac{7}{132}$, hence $z$ has enough curvature to be distributed to bad vertices on the heptagon. 
If the pattern of $z$ is $(4,7,8)$ ($(4,7,9)$ resp.) and the pattern of $w$ is $(3,7,8)$ ($(3,7,9)$ resp.), we can get the conclusion by a similar argument as in the situation {Subcase 2.2.3(A)} above where $y$ is of the pattern $(3,7,9)$.
If the pattern of $w$ is $(4,7,8),$ which implies that the pattern of $z$ must be $(4,7,8)$,  then we have 
$\Phi(z)+\Phi(w)=\frac{1}{28}.$ They have enough curvature to be distributed to bad vertices of the heptagon, since
$$\frac{1}{28}+\frac{5}{252}>\frac{7}{132}.$$ If the pattern of $w$ is $(4,7,9),$ then the pattern of $z$ must be $(4,7,9)$. Similarly, we consider $s$ and $t$, if $s$ or $t$ is not $(4,7,9)$, the conclusion is obvious. If all of them are $(4,7,9)$, then $u$ is of the pattern $(4,4,7)$ which has enough curvature to distribute.
\end{description}
\begin{figure}[tb]
\begin{minipage}[b]{0.3\textwidth}
\centering
\includegraphics[width=3.5cm,height=3.5cm]{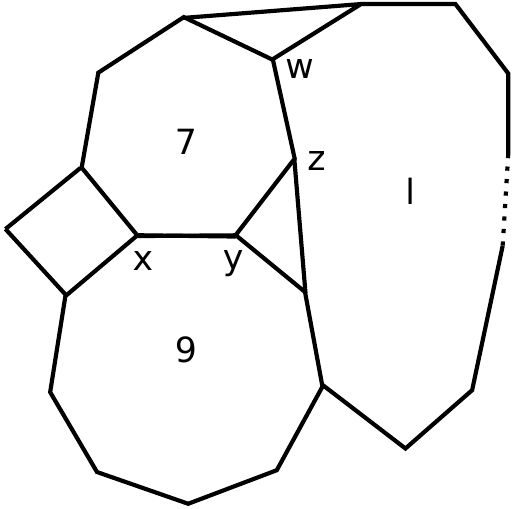}
(a)
\end{minipage}
\begin{minipage}[b]{0.3\textwidth}
\centering
\includegraphics[width=3.7cm,height=3.7cm]{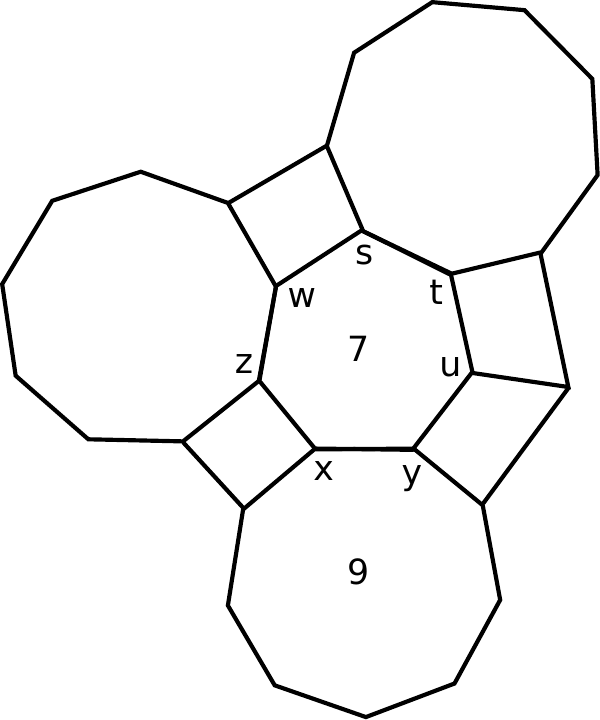}
(b)
\end{minipage}
\caption{\small The heptagon is not adjacent to any $k$-gon for $32\leq k\leq41$.}
 \label{4-7-9-1}
\end{figure}
\end{description}
\end{description}

{\bf Step 3.} We consider the last case in which bad vertices are on a $k$-gon with $18\leq k\leq19.$ That is, bad vertices are of the pattern $(4,5,k)$ for $18\leq k\leq19$. We divide it into the following cases.
\begin{description}
\item[Case 3.1] There exists a vertex {$z$ on the boundary of} $k$-gon with $18\leq k\leq19$ which is of the pattern $(3,4,k),$ see Figure \ref{3-4-5-k}(a). Note that $\Phi(z)=\frac{1}{12}+\frac{1}{k}.$ 

For $k=18,$ the curvature of a vertex with positive curvature on the $18$-gon is at least $\frac{1}{180}.$ {Let $l$ be the number of vertices with positive curvature on the $18$-gon, $1\leq l\leq 18.$ }Then
$$\Phi(z)+\frac{l-1}{180}>\frac{l}{132}.$$
Hence $z$ has enough curvature to be distributed to all bad vertices on the $18$-gon.

For $k=19$, the curvature on vertices of the $19$-gon is at least $\frac{1}{380}$, which yields that 
$$\Phi(z)+\frac{18}{380}>\frac{19}{132}.$$
Hence $z$ also has enough curvature to be distributed to all bad vertices on the $19$-gon.
\item[Case 3.2] There exists a vertex {$z$ on the boundary of} $k$-gon with $18\leq k\leq19$ which is of the pattern $(3,5,k),$ see Figure \ref{3-4-5-k}(b). Note that $\Phi(z)=\frac{1}{30}+\frac{1}{k}$.
Hence, for the case $k=18$ it is similar to Case 1 that
$$\Phi(z)+\frac{l-1}{180}>\frac{l}{132},$$ {where $l$ is the number of vertices with positive curvature on the $18$-gon.}
For the case of $k=19,$ since $\Phi(w)+\Phi(w')\geq2\left(\frac{1}{k}-\frac{1}{24}\right)$, we have
$$\frac{1}{30}+\frac{1}{k}+2\left(\frac{1}{k}-\frac{1}{24}\right)+(k-3)\left(\frac{1}{k}-\frac{1}{20}\right)>\frac{k}{132}.$$
Hence, {good vertices on the $k$-gon} have enough curvature to be distributed to bad vertices on the $k$-gon.

\begin{figure}[tb]
\begin{minipage}[b]{0.3\textwidth}
\centering
\includegraphics[width=3.5cm,height=2.4cm]{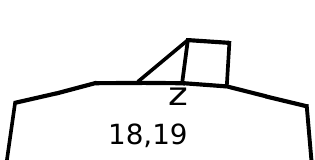}
(a)
\end{minipage}
\begin{minipage}[b]{0.3\textwidth}
\centering
\includegraphics[width=3.5cm,height=2.4cm]{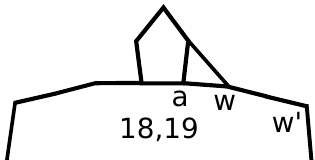}
(b)
\end{minipage}
\begin{minipage}[b]{0.3\textwidth}
\centering
\includegraphics[width=3.5cm,height=2.4cm]{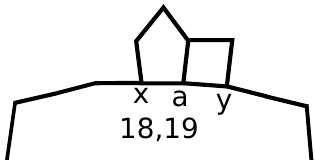}
(c)
\end{minipage}
\caption{\small Three cases for Step 3.}
 \label{3-4-5-k}
\end{figure}

\item[Case 3.3]  No vertices on the $k$-gon is of the pattern $(3,4,k)$ or $(3,5,k),$ $18\leq k\leq 19.$ In this case, given a bad vertex $a$ on the $k$-gon, we consider its neighbors $x$ and $y$, see Figure \ref{3-4-5-k} (c). Since there are no vertices on the $k$-gon of the pattern $(3,4,k)$ or $(3,5,k)$, the pattern of $x$ must be $(4,5,k)$ and the pattern of $y$ must be $(4,4,k)$ or $(4,5,k).$ Applying similar arguments to other vertices on the $k$-gon, we conclude that all edges of $k$-gon are incident to squares or pentagons. Now we consider the cases $k=18$ and $k=19$ respectively.
\begin{description}
\item [Subcase 3.3.1] $k=18.$ We observe that if there is a vertex on the boundary of $18$-gon which is of the pattern 
$(4,4,18)$, then this vertex has enough curvature to be distributed to bad vertices on the $18$-gon, since $\frac{1}{18}+\frac{l-1}{180}>\frac{l}{132},$ where $l$ is the number of the vertices on the $18$-gon with positive curvature. Hence it suffices to consider that all patterns of vertices of $18$-gon are $(4,5,18).$

\begin{figure}[htbp]
\begin{center}
\includegraphics[width=0.4\linewidth]{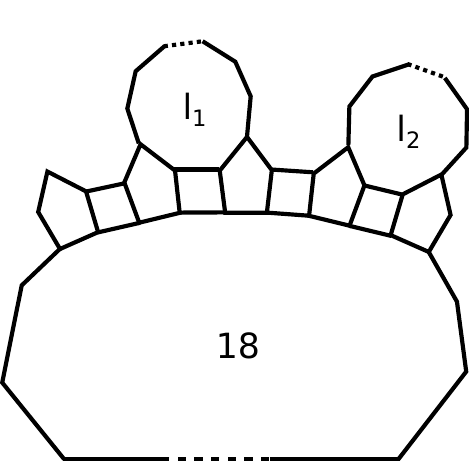}
\caption{\small Two squares, adjacent to the given $18$-gon and  a common pentagon, cannot be in the first class $\mathcal{M}_1$ simultaneously.}
\label{l}
\end{center}
\end{figure}

Note that all patterns which contain a square and a pentagon are $(3,4,5)$, $(4,4,5)$, $(4,5,l), 5\leq l\leq20$, $(3,3,4,5)$, $(3,4,4,5)$. We consider the squares which are adjacent to the given $18$-gon. We divide these squares into two classes: The first class $\mathcal{M}_1$ consists of the squares which are adjacent to another $l$-gon with $7\leq l\leq20;$ The second class $\mathcal{M}_2$ contains all the other squares. A useful
observation is that if two squares, adjacent to the given $18$-gon, are adjacent to a common pentagon, then these  squares cannot be in the first class $\mathcal{M}_1$ simultaneously, see e.g. Figure \ref{l}. Since there are
$9$ squares adjacent to the $18$-gon and $9$ is an odd number, there exist two squares in the second class $\mathcal{M}_2$ adjacent to a common pentagon, see Figure \ref{4-4}(a). Hence possible patterns of $a$, $b$, $c$ and $d,$ depicted in Figure \ref{4-4}(a), are $(3,4,5)$, $(4,4,5)$, $(4,5,5)$, $(4,5,6)$, $(3,3,4,5)$ and
$(3,4,4,5)$. The curvature of a vertex of any pattern above is at least $\frac{1}{30}$, so that they have enough curvature to be distributed to bad vertices on the $18$-gon, since
$$\frac{4}{30}+\frac{1}{180}\times18>\frac{18+4}{132}.$$

\begin{figure}[tb]
\begin{minipage}[b]{0.3\textwidth}
\centering
\includegraphics[width=3.7cm,height=2.3cm]{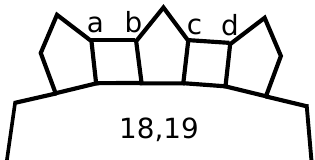}
(a)
\end{minipage}
\begin{minipage}[b]{0.3\textwidth}
\centering
\includegraphics[width=3.7cm,height=3.7cm]{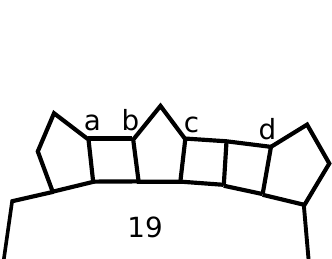}
(b)
\end{minipage}
\caption{\small There exist two squares in the second class $\mathcal{M}_2$ adjacent to a common pentagon.}
 \label{4-4}
\end{figure}

\item [Subcase 3.3.2] $k=19.$ Note that if there are two vertices on the $19$-gon which are of the pattern 
$(4,4,19)$, then these vertices have enough curvature to be distributed to bad vertices on the $19$-gon, since $\frac{2}{19}+\frac{17}{380}>\frac{19}{132}$. So that it suffices to consider that all patterns of vertices of $19$-gon are $(4,5,19)$ with an exceptional vertex of the pattern $(4,4,19)$. Adopting similar arguments as in Subcase 3.3.1, we have two situations, as depicted in Figure \ref{4-4}(a) and Figure \ref{4-4}(b). In both cases, the vertices $a$,
$b$, $c$ and $d$ have enough curvature to be distributed to bad vertices of the $19$-gon, since
$$\frac{4}{30}+\frac{1}{19}+\frac{1}{380}\times18>\frac{19+4}{132}.$$
\end{description}

\end{description}

Combining all cases above, we distribute the curvature at good vertices to bad vertices such that all vertices in $T_G$ have final curvature $\wt{\Phi}$ uniformly bounded below by $\frac{1}{132}.$ This proves the upper bound $\sharp T_G\leq 132.$ For the equality case, suppose that $G$ has $\sharp T_G=132,$ since all the inequalities in the discharge method are strict, then there are no bad vertices in $T_G$ and all of them have curvature $\frac{1}{132}.$ The vertex pattern of curvature $\frac{1}{132}$ is given by 
 $(3,3,4,11),$ $(4,6,11),$ or $(3,11,12).$ Hence, $G$ has exactly 12 disjoint hendecagons.
\end{proof}

Now we study finite planar graphs with nonnegative curvature. First at all, we show that there are infinitely many finite planar graphs with a large face which are not prism-like graphs. This indicates that the upper bound of maximal facial degree $132$ in Theorem~\ref{thm:finite} is somehow necessary.  
\begin{example}\label{ex:oneface} For any even number $m\geq8,$ there is a finite planar graph $G_m\in \NNG,$ which is not a prism-like graph, such that it has a unique face of degree $m$ and all the other faces are triangles and squares. Moreover, $$\sharp T_G=m+4$$ which yields that $K_{\SP^2}=\infty,$ see Problem~\ref{prob:main} for the definition.
\end{example}
\begin{figure}[htbp]
\begin{center}
\includegraphics[width=0.4\linewidth]{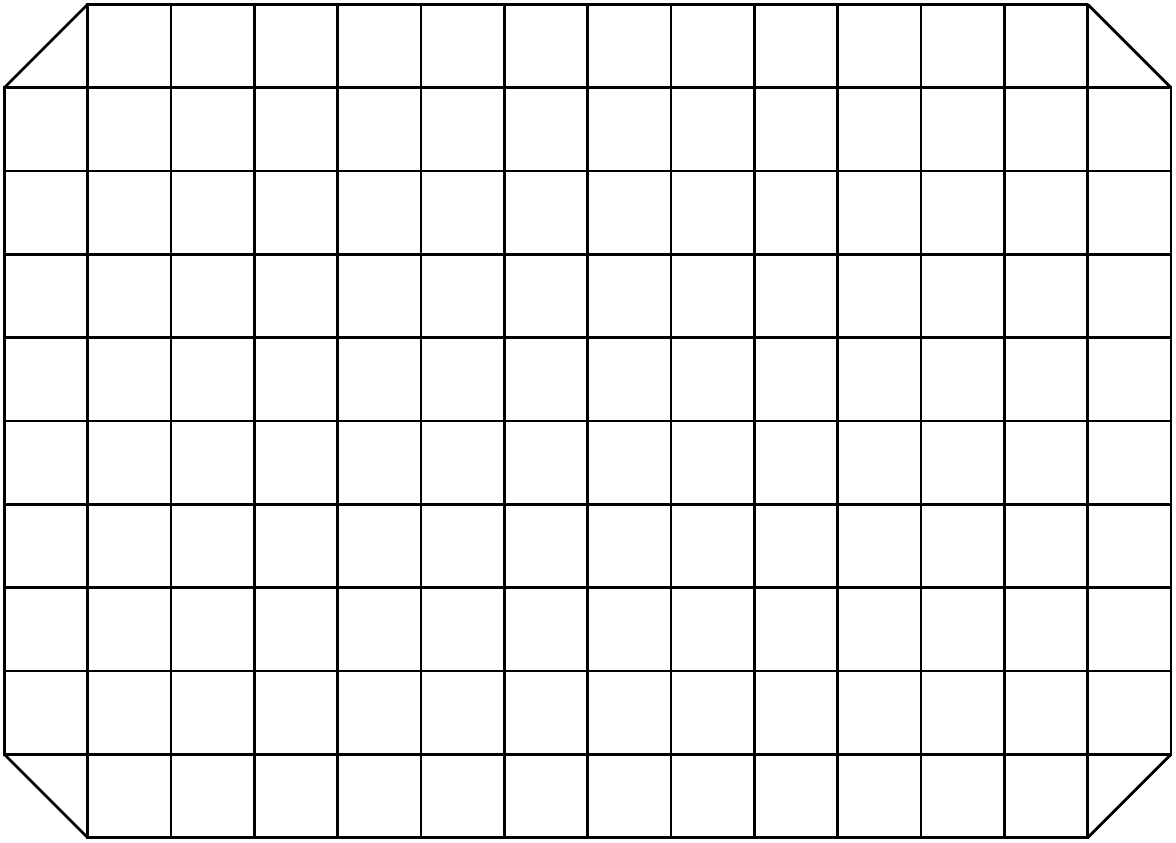}
\caption{\small A finite graph has many vertices with positive curvature, which is not prism-like.}
\label{kkkkk}
\end{center}
\end{figure}
\begin{proof} Write $m=2(a+b+2)$ for some $a\geq 1, b\geq 1.$ We construct a rectangle of side length $a$ and $b,$ consisting of $a\times b$ squares, and attach $2(a+b)$ squares along the sides of the rectangle and $4$ triangles to the four corners. We obtain a convex domain as in Figure \ref{kkkkk} whose boundary consists of $m$ edges. Now glue an $m$-gon along the boundary of the domain, we get a planar graph as desired. 
\end{proof}

\begin{proof}[Proof of Theorem~\ref{thm:finite} (Upper bound)] Let $G$ be a finite planar graph in $\NNG$ with $D_G< 132,$ which is not a prism-like graph. Note that our discharging method in the proof of Theorem~\ref{main thm} is local, one can distribute the curvature to bad vertices such that the modified curvature $\wt{\Phi}\geq \frac{1}{132}$ on $T_G.$ Then by $\Phi(G)=2,$ one gets the upper bound $264.$ For the equality case, one can argue similarly as in the proof of Theorem~\ref{main thm}.
\end{proof}




\section{Constructions of large planar graphs with nonnegative curvature}\label{section:4}
In this section, we prove the lower bound estimates for the number of vertices in $T_G$ for a planar graph $G$ with nonnegative curvature in Theorem~\ref{main thm} and Theorem~\ref{thm:finite} by constructing examples. 

In the first part of the section, we consider infinite planar graphs. As shown in the introduction, there is an infinite planar graph $G\in \NNG$ which is not a prism-like graph with $\sharp T_G=132,$ see Figure~\ref{size132}. By this example, we give the lower bound estimate of $K_{\R^2}\geq 132$ in Theorem~\ref{main thm}. However, in a rigorous manner we need to show that the graph in Figure~\ref{size132} can be further extended up to infinity, so as to guarantee that it is a planar tessellation. 

We cut off a central part of Figure~\ref{size132} and denote it by $P_A,$ shown in Figure~\ref{figA}. Then the annular part surrounding it in Figure~\ref{size132} is denoted by $P_B,$ shown in Figure~\ref{figB}, in which we have modified the sizes of faces to obtain a nice-looking annular domain. 
\begin{figure}[htbp]
\begin{center}
\includegraphics[width=0.7\linewidth]{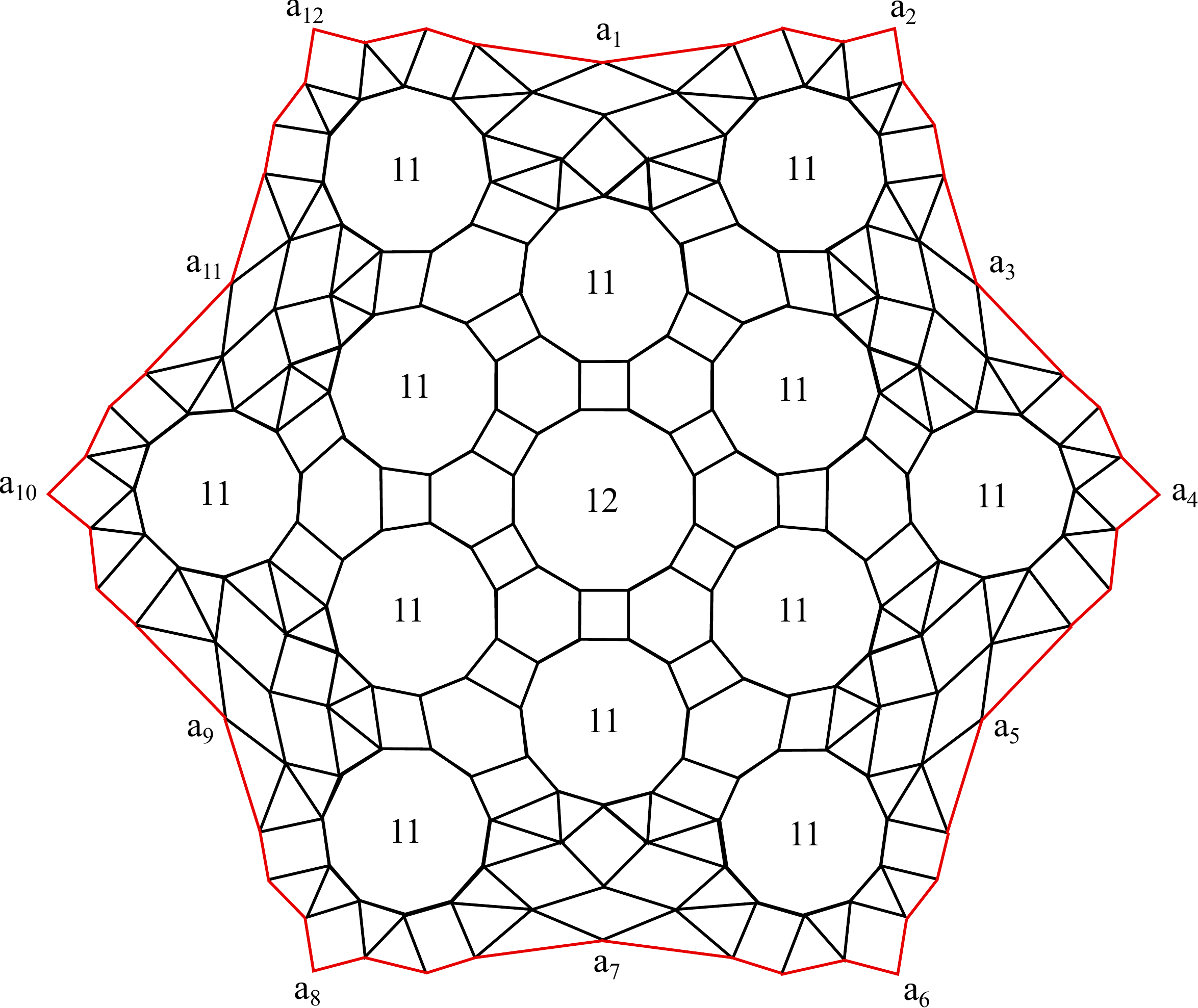}
\caption{\small A central part in Figure~\ref{size132}, denoted by $P_A.$}
\label{figA}
\end{center}
\end{figure}
\begin{figure}[htbp]
\begin{center}
\includegraphics[width=0.7\linewidth]{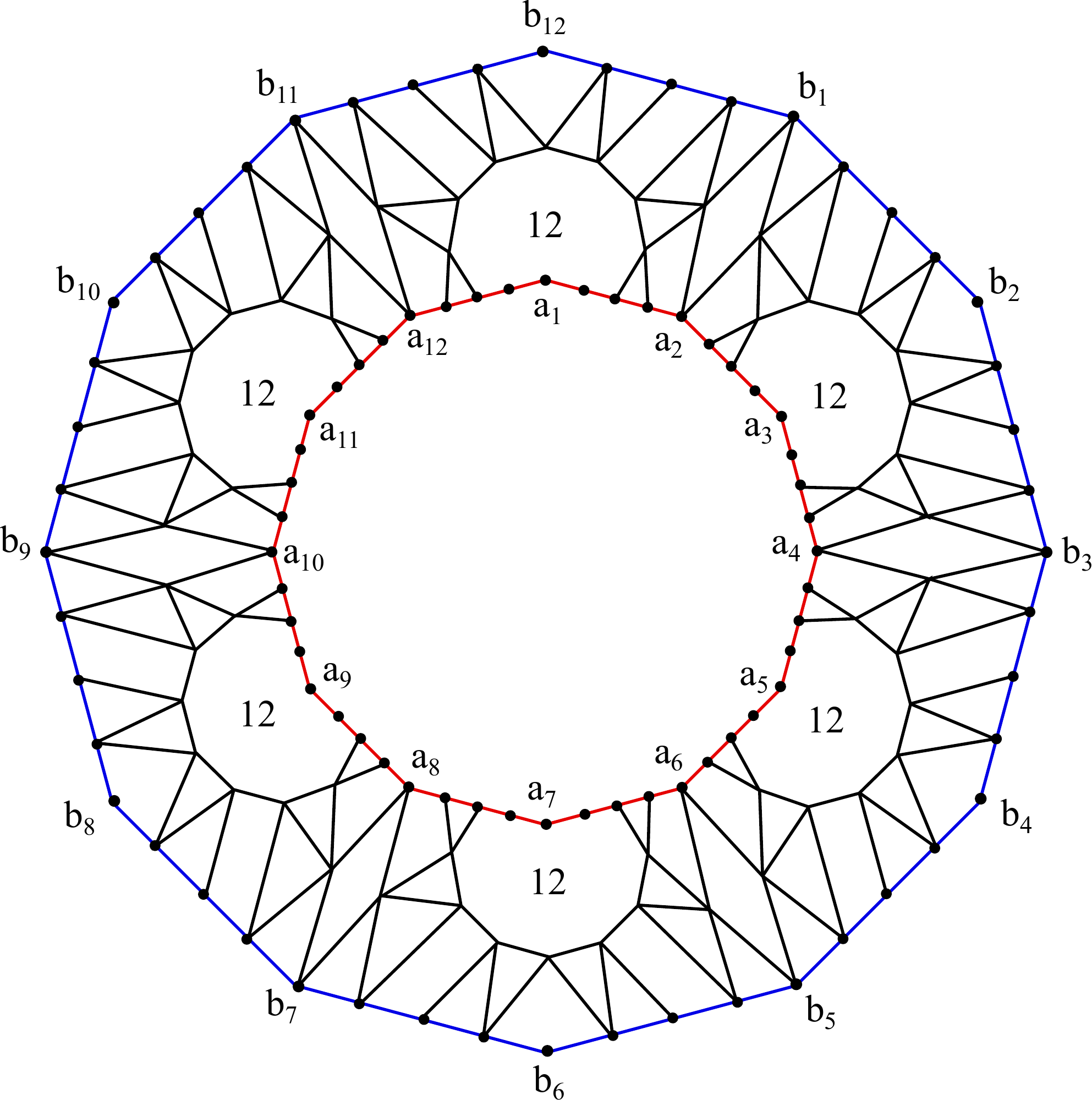}
\caption{\small An annular part in Figure~\ref{size132}, denoted by $P_B.$}
\label{figB}
\end{center}
\end{figure}

\begin{figure}[htbp]
\begin{center}
\includegraphics[width=0.7\linewidth]{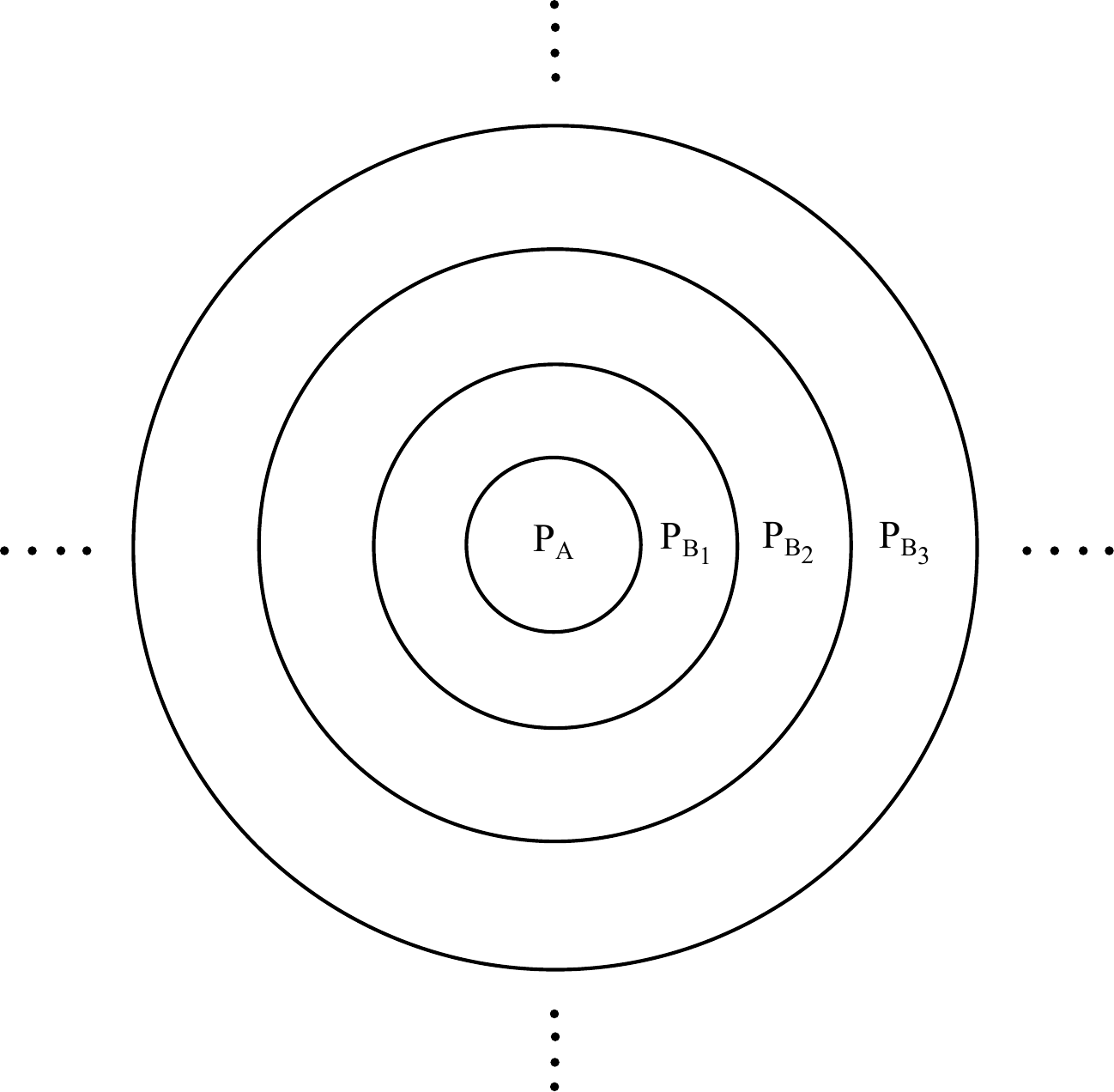}
\caption{\small The gluing process for the construction of Figure~\ref{size132}.}
\label{fig:cons}
\end{center}
\end{figure}
We make infinitely many copies of $P_B,$ denoted by $\{P_{B_i}\}_{i=1}^\infty,$ and glue these pieces together with $P_A$ as follows, see Figure~\ref{fig:cons}:
$$P_A\leftrightarrow P_{B_1}\leftrightarrow P_{B_2}\leftrightarrow P_{B_3}\leftrightarrow \cdots\leftrightarrow P_{B_i}\leftrightarrow \cdots\cdots,$$ where we denote by $G_1\leftrightarrow G_2$ the gluing of two graphs $G_1$ and $G_2$ along the boundaries following the rules:
\begin{enumerate}
\item For $G_1=P_A$ and $G_2=P_{B_1},$ we glue the boundary in $P_A$ with the innermost boundary of $P_{B_1}$ by identifying the vertices $\{a_i\}_{i=1}^{12}$ in both graphs. 
\item For $G_1=P_{B_i}$ and $G_2=P_{B_{i+1}}$ with $i\geq 1,$ we glue the outermost boundary in $P_{B_i}$ with the innermost boundary of $P_{B_{i+1}}$ by identifying the vertices $\{b_i\}_{i=1}^{12}$ in $P_{B_i}$ with the vertices $\{a_i\}_{i=1}^{12}$ in $P_{B_{i+1}}$. 
\end{enumerate}
This constructs the example in Figure~\ref{size132}. One easily sees that this construction has a periodic structure, so that it extends to an infinite planar graph embedded into $\R^2.$

For the second part of the section, we consider finite planar graphs. We will construct a finite planar graph $G\in \NNG$ with $\sharp T_G=264,$ which is not a prism-like graph and has $D_G=12.$  This will give the lower bound estimate of $\wt{K}_{\SP^2}\geq 264$ in Theorem~\ref{thm:finite}. We make two copies of $P_A$ and $P_B$ respectively, denote by $P_{A_1},$ $P_{A_2},$ $P_{B_1}$ and $P_{B_2}.$ We glue them together along the boundaries as follows
$$P_{A_1}\leftrightarrow P_{B_1} \leftrightarrow P_{B_2} \leftrightarrow P_{A_2},$$ where the boundary of $P_{A_1}$ is glued with the innermost boundary of $P_{B_1},$ the outermost boundary of $P_{B_1}$ is glued with the innermost boundary of $P_{B_2}$ by identifying the vertices $\{b_i\}_{i=1}^{12}$ in the former with the vertices $\{a_i\}_{i=1}^{12}$ in the latter, and the outermost boundary of $P_{B_2}$ is glued with the boundary of $P_{A_2}$ by identifying the vertices $\{b_i\}_{i=1}^{12}$ in the former with the vertices $\{a_i\}_{i=1}^{12}$ in the latter. This gives us an example of $\sharp T_G=264.$

\section{Automorphism groups of planar graphs with nonnegative curvature}\label{sec:auto}
In this section, we study automorphism groups of planar graphs with nonnegative curvature.

First, we introduce several definitions of isomorphisms on planar graphs.
\begin{definition} Let $G_1=(V_1,E_1,F_1)$ and $G_2=(V_2,E_2,F_2)$ be two planar graphs. 
\begin{enumerate}
\item $G_1$ and $G_2$ are said to be \emph{graph-isomorphic} if there is a graph isomorphism between $(V_1,E_1)$ and $(V_2,E_2),$ i.e. $R:V_1\to V_2$ such that for any $v,w\in V,$ $v\sim w$ if and only if $R(v)\sim R(w).$
\item $G_1$ and $G_2$ are said to be \emph{cell-isomorphic} if there is a cellular isomorphism $H=(H_V,H_E,H_F)$ between $(V_1,E_1,F_1)$ and $(V_2,E_2,F_2)$ in the sense of cell complexes, i.e. three bijections $H_V:V_1\to V_2,$ $H_E:E_1\to E_2$ and $H_F: F_1\to F_2$ preserving the incidence relations, that is, for any $v\in V, e\in E, \sigma\in F,$ $v\prec e$ if and only if $H_V(v)\prec H_E(e)$ and
$e\prec \sigma$ if and only if $H_E(e)\prec H_F(\sigma);$
\item $G_1$ and $G_2$ are said to be \emph{metric-isomorphic} if there is an isometric map in the sense of metric spaces $L:S(G_1) \to S(G_2),$ such that the restriction map $L$ is cell-isomorphic between $(V_1,E_1,F_1)$ and $(V_2,E_2,F_2).$
\end{enumerate}
\end{definition}
One is ready to see that metric-isomorphic planar graphs are cell-isomorphic, and hence graph-isomorphic.
For a planar graph $G,$ a graph (cellular, metric resp.) isomorphism between $G$ and $G,$ i.e. setting $G_1=G_2=G$ in the above definitions, is called a graph (cellular, metric resp.) automorphism of $G.$ We denote by $\Aut(G),$ ($\wt{\Aut}(G),$ $\mathcal{L}(G)$ resp.) the group of graph (cellular, metric resp.) automorphisms of a planar graph $G.$ By the standard identification, $$\mathcal{L}(G)\leq \wt{\Aut}(G)\leq \Aut(G),$$ where $\leq$ indicates that the former can be embedded as a subgroup of the latter. By our definition of polyhedral surfaces, it is easy to see that
$$\mathcal{L}(G)\cong\wt{\Aut}(G).$$ Moreover, by the results in \cite{MR1506961,MR923264} for a 3-connected planar graph $G,$ any graph automorphism $R$ of $G$ can be uniquely realized as a cellular automorphism $H$ such that $H_V=R,$ which is called the associated cellular automorphism of $R.$ This implies that
$$\mathcal{L}(G) \cong\wt{\Aut}(G)\cong\Aut(G).$$

For any $G\in \NNG,$ let $H=(H_V,H_E,H_F)$ be a cellular automorphism of $G.$ Since for any vertex $v\in V,$ $\Phi(v)=\Phi(H_V(v)),$ 
$H_V: T_G\to T_G.$ This yields a group homomorphism,
\begin{equation}\label{eq:rho}\rho: \wt{\Aut}(G)\to S_{T_G}, \ H\mapsto H_V|_{T_G}\end{equation} where $S_{T_G}$ is the permutation group on $T_G$ and $H_V|_{T_G}$ is the restriction of $H_V$ to $T_G.$ The kernel of the homomorphism $\rho,$ denoted by $\ker \rho$, consists of $H\in \wt{\Aut}(G)$ such that $H_V|_{T_G}$ is the identity map on $T_G.$ By the group isomorphism theorem,
$$ \wt{\Aut}(G)/\ker\rho\cong \mathrm{im}(\rho)\leq S_{T_G},$$ where $\mathrm{im}(\rho)$ denotes the image of the map $\rho$ in $S_{T_G}.$ To show that  $\wt{\Aut}(G)$ is a finite group, it suffices to prove that $\ker\rho$ is finite.

We need some basic properties of cellular automorphisms of planar graphs. Recall that for $\sigma_1,\sigma_2\in F,$ we denote by $\sigma_1\sim \sigma_2$ if there is an edge $e$ such that $e\prec\sigma_1$ and $e\prec\sigma_2.$ By the definition of planar tessellation in the introduction, the edge $e$ satisfying the above property is unique. For any face $\sigma\in F,$ we write the \emph{vertex boundary} of $\sigma$ as $$\partial \sigma=\{v_1,v_2,\cdots, v_{\deg(\sigma)}\}$$ such that $v_i\sim v_{i+1}$ for any $1\leq i\leq \deg(\sigma)$ by setting $v_{\deg(\sigma)+1}=v_1.$ For a cellular automorphism $H$ of a planar graph $G,$ we say that $H$ \emph{fixes a face} $\sigma$ if $H_V(v)=v, H_E(e)=e$ and $H_F(\sigma)=\sigma$ for any $v\in \partial\sigma, e\prec\sigma, e\in E.$ It is easy to check that $H$ fixes $\sigma$ if and only if $H_V(v)=v$ for any $v\in\partial\sigma.$ The following lemma is useful.
\begin{lemma}\label{lem:isom} Let $G=(V,E,F)$ be a planar graph and $H\in \wt{\Aut}(G).$ Suppose that there are a face $\sigma$ and $\{v_1,v_2\}\subset \partial \sigma$ with $v_1\sim v_2$ satisfying
$$H_F(\sigma)=\sigma, H_V(v_1)=v_1, H_V(v_2)=v_2,$$ then $H$ fixes the face $\sigma,$ i.e. $H_V(v)=v$ for any $v\in \partial \sigma.$ Moreover, $H$ is the identity map on $G.$ 
\end{lemma}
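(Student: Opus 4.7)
The plan is to prove the statement in two stages: first I would show that the entire boundary cycle of $\sigma$ is pointwise fixed by $H_V$, and then I would propagate this rigidity across the whole graph via the face-adjacency (dual) graph, which is connected because $S(G)$ is a connected surface.

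For the first stage, enumerate $\partial\sigma$ in cyclic order as $w_1, w_2, \ldots, w_n$ with $w_{n+1}=w_1$ and $n=\deg(\sigma)$, arranged so that $w_1=v_1$ and $w_2=v_2$. Since $H_F(\sigma)=\sigma$ and $H$ preserves the incidence structure of the cell complex, the restriction $H_E$ permutes the set of edges of $\sigma$ and $H_V$ permutes $\partial\sigma$. The hypothesis gives $H_V(w_1)=w_1$ and $H_V(w_2)=w_2$, so $H_E$ fixes the edge $\{w_1,w_2\}$. I would now argue by induction: if $H_V$ fixes $w_{k-1}$ and $w_k$, then the two edges of $\sigma$ incident to $w_k$ are $\{w_{k-1},w_k\}$ and $\{w_k,w_{k+1}\}$; the former is already fixed by $H_E$, so by bijectivity of $H_E$ on the edge set of $\sigma$ the image $H_E(\{w_k,w_{k+1}\})$ must be an edge of $\sigma$ incident to $H_V(w_k)=w_k$ and different from $\{w_{k-1},w_k\}$, forcing $H_E(\{w_k,w_{k+1}\})=\{w_k,w_{k+1}\}$ and hence $H_V(w_{k+1})=w_{k+1}$. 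Iterating around the cycle completes the first stage.

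For the second stage, let $\sigma'$ be any face adjacent to $\sigma$ and let $e$ be the edge they share (unique by the tessellation axiom). Since both endpoints of $e$ lie in $\partial\sigma$ and are fixed by $H_V$, we have $H_E(e)=e$. The edge $e$ is incident to exactly the two faces $\sigma$ and $\sigma'$, and since $H_F(\sigma)=\sigma$ together with the bijectivity of $H_F$, we obtain $H_F(\sigma')=\sigma'$. Moreover, the two endpoints of $e$ are adjacent vertices in $\partial\sigma'$ that are fixed by $H_V$, so the first stage applied to $\sigma'$ shows $H_V|_{\partial\sigma'}$ is the identity. Because $S(G)$ is a connected surface, the face-adjacency graph of $G$ is connected, so a straightforward induction on distance in this graph yields $H_F(\tau)=\tau$ and $H_V|_{\partial\tau}=\mathrm{id}$ for every face $\tau\in F$. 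Since every vertex lies on some face and every edge is incident to some face, this forces $H_V=\mathrm{id}_V$, $H_E=\mathrm{id}_E$, $H_F=\mathrm{id}_F$, i.e.\ $H$ is the identity on $G$.

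The main obstacle I foresee is ensuring the induction in the first stage is airtight at the wrap-around step $k=n$ and that the bijectivity argument for the induced permutation of the edge set of $\sigma$ is stated cleanly; once that is settled, the propagation across adjacent faces is essentially a routine connectedness argument using the tessellation property (every edge has exactly two incident faces). No appeal to curvature or to the specific results of the earlier sections is needed, so this lemma is a purely combinatorial statement about cellular automorphisms of planar tessellations.
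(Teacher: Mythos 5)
Your proposal is correct and follows essentially the same route as the paper's proof: an induction around the boundary cycle of $\sigma$ using preservation of the incidence structure to fix all of $\partial\sigma$, followed by propagation to every face via the unique shared edge of adjacent faces and the connectedness of the dual graph. The only difference is that you spell out the boundary induction explicitly in terms of $H_E$ acting on the two edges of $\sigma$ at each vertex, which the paper leaves implicit.
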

\begin{proof} For the first assertion, by $H_F(\sigma)=\sigma$ we have $H_V(\partial \sigma)=\partial \sigma.$ We write $\partial \sigma=\{v_1,v_2,\cdots, v_{\deg(\sigma)}\}$ such that $v_i\sim v_{i+1}$ for any $1\leq i\leq \deg(\sigma)$ by setting $v_{\deg(\sigma)+1}=v_1.$ Noting that $H_V(v_1)=v_1$ and $H_V(v_2)=v_2,$ we get that $H_V(v_3)=v_3$ since the cellular automorphism $H$ preserves the incidence structure. The result follows from the induction argument on $v_i$ for $i\geq 3.$ 

For the second assertion, it suffices to show that $H$ fixes any face $\tilde{\sigma}\in F.$ Since the dual graph of the planar graph $G$ is connected, there is a sequence of faces $\{\sigma_j\}_{j=0}^M\subset F,$ $M\geq 1$, such that 
$$\sigma=\sigma_0\sim \sigma_1\sim \cdots\sim \sigma_M=\tilde{\sigma}.$$ By the above result, $H$ fixes the face $\sigma_0=\sigma.$ Let $\{v,w\}=\partial \sigma_0\cap  \partial \sigma_1.$ Noting that $H_F(\sigma_0)=\sigma_0, H_V(v)=v$ and $H_V(w)=w,$ we have $H_F(\sigma_1)=\sigma_1$ by the incidence preserving property of $H.$ Then one applies the first assertion to $\sigma_1,$ and yields that $H$ fixes $\sigma_1.$ Similarly, the induction argument implies that $H$ fixes $\sigma_j$ for any $0\leq j\leq M.$ Hence $H$ fixes $\tilde{\sigma}$ which proves the second assertion.

\end{proof} 

For any $v\in T_G,$ we denote by $N(v)$ the set of neighbours of $v$ and by $S_{N(v)}$ the permutation group on $N(v).$ For any $H\in \ker\rho,$ it is easy to see that $H_V:N(v)\to N(v)$ since $H_V(v)=v.$ This induces a group homomorphism 
\begin{equation}\label{eq:pi}\pi: \ker\rho\to S_{N(v)}, \ H\mapsto H_V|_{N(v)}.\end{equation}
\begin{lemma}\label{lem:isom2}Let $G=(V,E,F)$ be a planar graph with nonnegative combinatorial curvature and positive total curvature. For any $v\in T_G,$ the map $\pi,$ defined as in \eqref{eq:pi}, is a group monomorphism.
\end{lemma}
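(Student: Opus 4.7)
The plan is to separate the two content requirements — being a well-defined group homomorphism and being injective — and do each in turn, with the substantive work entirely concentrated in injectivity.

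First I would verify the easy part. For $H\in\ker\rho$ and $v\in T_G$, the identity $H_V(v)=v$ forces $H_V$ to send edges at $v$ to edges at $v$, hence to restrict to a bijection of $N(v)$. Thus $\pi$ is well-defined. That $\pi$ is a homomorphism is immediate, since $(H\circ H')_V = H_V\circ H'_V$ and restriction to $N(v)$ commutes with composition.

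The core of the proof is injectivity. I would take $H\in\ker\pi$, so that $H_V$ pointwise fixes $\{v\}\cup N(v)$, and aim to conclude $H$ is the identity map on $G$ via Lemma~\ref{lem:isom}. To do this, I pick any face $\sigma$ incident to $v$; such a face exists since $\deg(v)\geq 3$. Let $w_1,w_2\in N(v)$ be the two neighbours of $v$ lying on $\partial\sigma$, so that the edges $e_i=\{v,w_i\}$ are incident to $\sigma$ for $i=1,2$. Because $H_V(v)=v$ and $H_V(w_i)=w_i$, the edge $e_i$ is fixed by $H_E$, and hence the face $H_F(\sigma)$ is one of the two faces incident to $e_i$ (by tessellation property (ii)). The key claim is that $H_F(\sigma)=\sigma$: otherwise the face $H_F(\sigma)$ would, as the ``other'' face at $e_1$, also be forced to share $e_2$ with $\sigma$, so that $\sigma$ and $H_F(\sigma)$ would be two distinct faces whose closures intersect in more than a single edge, violating tessellation property (iii). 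Now Lemma~\ref{lem:isom} applied with $\sigma$, $v_1=v$, and $v_2=w_1$ yields that $H$ is the identity map on $G$, proving $\ker\pi=\{\mathrm{id}\}$.

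Two small sanity checks I would include at the end: that the hypothesis $v\in T_G$ is used only to guarantee $H_V(v)=v$ (so that $\pi$ makes sense), and that the assumption $\Phi(G)>0$ merely ensures $T_G\neq\emptyset$ so that such a vertex $v$ can be chosen. The only subtle step — and hence the part I would write most carefully — is the deduction $H_F(\sigma)=\sigma$ from the tessellation axioms; once that is in place, Lemma~\ref{lem:isom} does all the remaining work and the proof is essentially one paragraph.
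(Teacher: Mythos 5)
Your proposal is correct and follows essentially the same route as the paper: take $H\in\ker\pi$, find a face $\sigma$ with $\{v,w_1,w_2\}\subset\partial\sigma$, deduce $H_F(\sigma)=\sigma$ from incidence preservation, and invoke Lemma~\ref{lem:isom}. The only difference is that you spell out the tessellation-axiom argument for $H_F(\sigma)=\sigma$, which the paper leaves implicit.
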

\begin{proof}It suffices to show that the map $\pi$ is injective. For any $H\in \ker\pi,$ $H_V(w)=w,$ for any $w\in N(v)\cup\{v\}.$ Let $w_1,w_2\in N(v)$ such that there is a face $\sigma$ such that $\{w_1,w_2,v\}\subset \partial \sigma.$ By the incidence preserving property of $H,$ $H_F(\sigma)=\sigma.$ Then Lemma~\ref{lem:isom} yields that $H$ is the identity on $G.$ This implies that $\ker\pi$ is trivial and $\pi$ is injective.
\end{proof}

Now we are ready to estimate the order of the automorphism group of a planar graph with nonnegative curvature and positive total curvature.
\begin{theorem}\label{thm:autoest}Let $G=(V,E,F)$ be a planar graph with nonnegative combinatorial curvature and positive total curvature. Then the automorphism group of $G$ is finite. Set $Q(G):=\sharp\wt{\Aut}(G),$ $a:=\sharp T_G$ and $b:=\max_{v\in T_G}\deg(v).$ We have the following: 
\begin{enumerate}
\item If $D_G\leq 42,$
$$Q(G)\ |\ a! b!.$$ 
\item If $D_G>42,$ then 
$$Q(G)\ \mathrm{divides}\left\{\begin{array}{ll}2D_G,& G\ \mathrm{is\ infinite},\\4D_G,& G\ \mathrm{is\ finite}. \end{array}\right.$$
\end{enumerate}
\end{theorem}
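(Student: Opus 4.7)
My plan is to split along the two cases of the theorem and exploit a different structural feature of $G$ in each.

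\emph{Case (1): $D_G\le 42$.} Here I would use only the two homomorphisms already constructed in the text. The map $\rho:\wt{\Aut}(G)\to S_{T_G}$ from \eqref{eq:rho} yields $\wt{\Aut}(G)/\ker\rho\cong\mathrm{im}(\rho)\le S_{T_G}$. Since the hypothesis $\Phi(G)>0$ together with the Chen--Chen theorem (for infinite $G$) or finiteness of $V$ (for finite $G$) makes $T_G$ finite, $|\mathrm{im}(\rho)|$ divides $a!$. Next, for any $v\in T_G$ the map $\pi:\ker\rho\to S_{N(v)}$ of \eqref{eq:pi} is a monomorphism by Lemma~\ref{lem:isom2}, so $|\ker\rho|$ divides $\deg(v)!$ for every $v\in T_G$; taking $v$ with $\deg(v)=b$ gives $|\ker\rho|\mid b!$. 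Lagrange then gives $Q(G)=|\ker\rho|\cdot|\mathrm{im}(\rho)|\mid a!\,b!$, which is finite in particular.

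\emph{Case (2): $D_G>42$.} Here I would use the prism-like structure. Let $F_{\max}=\{\sigma\in F:\deg(\sigma)=D_G\}$. By Theorem~\ref{thm:prminf}, $\sharp F_{\max}=1$ when $G$ is infinite. When $G$ is finite, Lemma~\ref{lemma} forces each $\sigma\in F_{\max}$ to contribute at least $1$ to the total curvature, so Gauss--Bonnet ($\Phi(G)=2$) gives $\sharp F_{\max}\in\{1,2\}$ (and Theorem~\ref{thm:finiteprism} describes the prism-like sub-case when $\sharp F_{\max}=2$). Any $H\in\wt{\Aut}(G)$ preserves $F_{\max}$ setwise, so we get a homomorphism $\wt{\Aut}(G)\to S_{F_{\max}}$ whose kernel $K$ fixes every face in $F_{\max}$; hence $[\wt{\Aut}(G):K]\le(\sharp F_{\max})!\le 2$, with equality to $1$ in the infinite case.

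Now fix $\sigma_0\in F_{\max}$. For each $H\in K$, the restriction $H_V|_{\partial\sigma_0}$ is an incidence-preserving bijection of the cyclically ordered $D_G$-gon, hence an element of the dihedral group $\mathcal{D}_{D_G}$ of order $2D_G$. This defines a homomorphism $\psi:K\to\mathcal{D}_{D_G}$. The key point is that $\psi$ is injective: if $\psi(H)$ is trivial, then $H_F(\sigma_0)=\sigma_0$ and $H_V$ fixes two adjacent vertices of $\partial\sigma_0$, so Lemma~\ref{lem:isom} forces $H$ to be the identity on all of $G$. Therefore $|K|\mid 2D_G$, and assembling with the index bound yields $Q(G)\mid 2D_G$ in the infinite case and $Q(G)\mid 4D_G$ in the finite case (the sub-case $\sharp F_{\max}=1$, cf.\ Example~\ref{ex:oneface}, gives the stronger $2D_G\mid 4D_G$ and is absorbed in the stated bound).

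\emph{Where the work sits.} Most of the proposal is formal once the right two maps ($\rho,\pi$ in Case~(1); the setwise action on $F_{\max}$ and $\psi$ in Case~(2)) are set up. The real content is Lemma~\ref{lem:isom}, which propagates fixed data from a single face across the entire tessellation via dual-graph connectivity; that is what makes $\psi$ injective in Case~(2) and it is the only step I expect to require care. All divisibility conclusions then follow by Lagrange, and no extra discharging-type analysis is needed beyond what has already been used in Sections~\ref{section:3} and~\ref{section:4} to bound $\sharp T_G$ and to characterize the prism-like structure.
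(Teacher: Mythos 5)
Your proposal is correct and follows essentially the same route as the paper: Case (1) uses exactly the homomorphisms $\rho$ and $\pi$ with Lemmas~\ref{lem:isom} and~\ref{lem:isom2}, and Case (2) uses the action on the large faces followed by the dihedral restriction to the boundary of one of them, with injectivity again coming from Lemma~\ref{lem:isom}. The only (cosmetic) difference is that you treat the finite subcases of Case (2) uniformly via the action on $F_{\max}$, where the paper separates the one-large-face and two-large-face subcases.
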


\begin{proof} Suppose that $D_G\leq 42,$ then by Lemma~\ref{lem:isom},
$$\sharp({\wt{Aut}(G)}/{\ker \rho})\ |\ a!,$$ where $\rho$ is defined in \eqref{eq:rho}. By Lemma~\ref{lem:isom2}, $$\sharp\ker\rho\ |\ b!.$$ This yields the result.

Suppose that $D_G> 42,$ then the planar graph $G$ has some special structure. For simplicity, we write $m:=D_G.$ We divide it into two cases:

\begin{description}
\item[Case 1.] $G$ is infinite. In this case, $G$ is a prism-like graph. By Theorem~\ref{thm:prminf}, there is only one face $\sigma$ with $\deg(\sigma)=m>42,$ and $T_G=\partial \sigma.$ For any $H\in \ker\rho,$ $H_F(\sigma)=\sigma$ and $H_V(v)=v$ for any $v\in T_G,$ so that by Lemma~\ref{lem:isom}, $H$ is the identity map on $G.$ This yields that $$\wt{\Aut}(G)\cong \mathrm{im}(\rho)\leq S_{T_G}.$$ For any $H\in \wt{\Aut}(G),$ $\rho(H)\in \mathrm{im}(\rho)$ induces a graph isomorphism on the cycle graph $C_{m},$ which is given by the dihedral group $D_{m}.$ That yields that $$\mathrm{im}(\rho)\leq D_{m} \mathrm{\ and\ } Q(G)|2m.$$

\item[Case 2.] $G$ is finite. We have two subcases.

\begin{description}
\item[Subcase 2.1.] Suppose that $\sharp\{\sigma\in F: \deg(\sigma)>42\}\geq 2,$ then $G$ is a finite prism-like graph.
By Theorem~\ref{thm:finiteprism}, there are exactly two faces $\sigma_1$ and $\sigma_2,$ which are disjoint, of same degree $m>42,$ and $T_G=\partial\sigma_1\cup\partial \sigma_2.$ For any $H\in \wt{\Aut}(G),$ $H_F:\{\sigma_1,\sigma_2\}\to \{\sigma_1,\sigma_2\},$ which yields a group homomorphism
$$\wt{\rho}: \wt{\Aut}(G)\to S_{\{\sigma_1,\sigma_2\}},\ H\mapsto H_F|_{\{\sigma_1,\sigma_2\}},$$ where 
$S_{\{\sigma_1,\sigma_2\}}$ is the permutation group on ${\{\sigma_1,\sigma_2\}}.$  This implies that
\begin{equation}\label{eq:par1}\sharp ( \wt{\Aut}(G)/\ker\wt{\rho})=1\ \mathrm{or}\ 2.\end{equation} For any $H\in \ker\wt{\rho},$ $H_F(\sigma_i)=\sigma_i$ for $i=1,2.$ Hence $H_V:\partial \sigma_1\to \partial \sigma_1.$ This yields a group homomorphism
$$\eta: \ker\wt{\rho}\to S_{\partial \sigma_1}, H\mapsto H_V|_{\partial \sigma_1},$$ where $S_{\partial \sigma_1}$ is the permutation group on ${\partial \sigma_1}.$ The same argument as in Case 1 implies that $\ker\eta$ is trivial and $\mathrm{im}(\eta)\leq D_m.$ Hence $$\sharp\ker\wt{\rho}\ | 2m.$$ By combining it with \eqref{eq:par1} we have $$Q(G)| 4m.$$

\item[Subcase 2.2.] Suppose that $\sharp\{\sigma\in F: \deg(\sigma)>42\}=1,$ then there is only one face $\sigma\in F$ such that $\deg(\sigma)=m.$ The same argument as in Case 1 yields that
$$Q(G)| 2m.$$
\end{description}
\end{description}

Combining all cases above, we prove the theorem.

\end{proof}

By combining the estimates of the size of $T_G$ in Theorem~\ref{main thm} and Theorem~\ref{thm:finite} with the above result, we obtain the estimates for the orders of cellular automorphism groups.
\begin{proof}[Proof of Theorem~\ref{thm:autoint}]
For any $G\in\NNG$ with $D_G\leq 42,$ we obtain that $\sharp T_G\leq 132$ if $G$ is infinite by Theorem~\ref{main thm}, and $\sharp T_G\leq 264$ if $G$ is finite by Theorem~\ref{main thm}. Then the theorem follows from Theorem~\ref{thm:autoest}.
\end{proof}






\bigskip

{\bf Acknowledgements.} 
We thank the anonymous referee for carefully reading the proofs, and providing many valuable comments and suggestions to improve the writing of the paper. We thank Wenxue Du for many helpful discussions on automorphism groups of planar graphs. 

B. H. is supported by NSFC, grant no. 11831004 and grant no. 11826031. Y. S. is supported by NSFC, grant no. 11771083 and NSF of Fujian Province through Grants 2017J01556, 2016J01013.

\bibliography{Reti-ref-sec}
\bibliographystyle{alpha}

\end{document}